\author{A. Böttcher \and H. Egger}
\address{Department of Mathematics, TU Darmstadt, Germany}
\email{boettcher@mathematik.tu-darmstadt.de,egger@mathematik.tu-darmstadt.de}
\title[Energy stable discretization of Allen-Cahn type models]{Energy stable discretization of Allen-Cahn type problems modeling the motion of phase boundaries}
\newtheorem{lemma}{Lemma}[section]
\newtheorem{problem}[lemma]{Problem}
\theoremstyle{definition}
\newtheorem{remark}[lemma]{Remark}
\newtheorem{notation}[lemma]{Notation}
\newtheorem*{example*}{Example}
\def\div{\mathrm{div}}
\def\dt{\partial_t}
\def\dn{\partial_n}
\def\dx{\partial_x}
\def\dxx{\partial_{xx}}
\def\dtau{\bar\partial_\tau}
\def\RR{\mathbb{R}}
\def\eps{\varepsilon}
\numberwithin{equation}{section}
\numberwithin{table}{section}
\numberwithin{figure}{section}
\begin{document}

\begin{abstract}
We study the systematic numerical approximation of a class of Allen-Cahn type problems modeling the motion of phase interfaces. 
The common feature of these models is an underlying gradient flow structure which gives rise to a decay of an associated energy functional along solution trajectories. We first study the discretization in space by a conforming Galerkin approximation of a variational principle which characterizes smooth solutions of the problem. Well-posedness of the resulting semi-discretization is established and the energy decay along discrete solution trajectories is proven. A problem adapted implicit time-stepping scheme is then proposed and we establish its well-posed and decay of the free energy for the fully discrete scheme. Some details about the numerical realization by finite elements are discussed, in particular the iterative solution of the nonlinear problems arising in every time-step.
The theoretical results are illustrated by numerical tests which also provide further evidence for asymptotic expansions of the interface velocities derived by Alber et al.
\end{abstract}

\maketitle

\vspace*{-1em}

\begin{quote}
\noindent 
{\small {\bf Keywords:} 
Allen-Cahn equation,
phase-field models, 
gradient systems,
mean curvature flow,
finite elements,
implicit time stepping}
\end{quote}

\begin{quote}
\noindent
{\small {\bf AMS-classification (2000):} 65M60,74N20,35J93,53C44}
\end{quote}

\section{Introduction} \label{sec:intro}

We consider the systematic numerical approximation of a class of Allen-Cahn type equations describing, for instance, the motion of anti-phase boundaries in crystalline solids \cite{AllenCahn79}, the evolution of interfaces in more general phase field models \cite{BloweyElliott93,FriedGurtin94,Visintin96}, or the geometric evolution by mean curvature \cite{DeckelnickDziukElliot05,Garcke00}.
For motivation of our considerations, let us briefly consider two particular examples which have been studied in detail in \cite{Alber15,AlberZhu13}. 
As a first model problem, we consider the generalized Allen-Cahn equation
\begin{align} \label{eq:ac}
\dt S^{ac} =  \left( \mu^{1/2}  \lambda \Delta_x S^{ac} - \frac{1}{\mu^{1/2}} \psi'(S^{ac}) - F'(S^{ac}) \right) \frac{c}{(\mu \lambda)^{1/2}}.
\end{align}
Here $S^{ac}$ is an order parameter which takes values close to zero in one phase of the medium and close to one in the other, $\psi(S)=4 S^2(1-S)^2$ is the double well potential, $F'(S)$ is a source term due to external loading, and $c$, $\lambda$ are model parameters. The coefficient $\mu$ plays the role of a regularization parameter that allows to widen the transition zone between the phases and thus to alleviate the numerical simulation. 
For the choice $c=1$, $\lambda = 1$, $\mu=\eps^2$, and $F'(S) = 0$, one obtains the standard Allen-Cahn equation
\begin{align} \label{eq:ac0}
\dt S = \Delta S - \frac{1}{\eps^2} \psi'(S)
\end{align}
which has been studied intensively in the literature; see e.g. \cite{EvansSonerSouganidis92,Ilmanen93,RubinsteinSternbergKeller89} and the references given there. 
It is well-known \cite{Garcke13} that \eqref{eq:ac0}, and therefore also \eqref{eq:ac}, 
can be understood as a gradient flow for an associated free energy functional, which for the problem \eqref{eq:ac} has the form
\begin{align} \label{eq:eac}
E^{ac}(S) = \int_\Omega \frac{\mu^{1/2} \lambda}{2} |\nabla_x S|^2 + \frac{1}{\mu^{1/2}}  \psi(S) + F(S) \;  dx.
\end{align}
The multiplicative term $\frac{c}{(\mu \lambda)^{1/2}}$ on the right hand side of \eqref{eq:ac} yields a scaling of time that leads to a non-trivial behavior for $\mu \to 0$, which is called the {\em sharp interface limit}.
It is shown in \cite{Alber15,AlberZhu13} that the velocity of the interface $\Gamma(t)=\{x : S^{ac}(x)=1/2\}$ can be expanded as 
\begin{align} \label{eq:vac}
v^{ac}(\mu;\lambda) =  \frac{c}{c_1} n_\Gamma \cdot [C] \cdot n_\Gamma + c \lambda^{1/2} \kappa_{\Gamma} + O(\mu^{1/2}),
\end{align}
where $c_1$ can be expressed expicitly in terms of the problem data.
Here $[C]$ denotes the jump of the Eshelby tensor \cite{Eshelby59,Eshelby61} which is related to the external forces $F'(S)$
and $n_\Gamma$ is the normal vector while $\kappa_{\Gamma}$ is the mean curvature of the interface $\Gamma$. 
In the limit $\mu \to 0$, the velocity of the interface motion can thus be described by the well-known relation \cite{Ilmanen93,RubinsteinSternbergKeller89}
\begin{align} \label{eq:vsi}
\bar v(\lambda) = \frac{c}{c_1} n_\Gamma \cdot [C] \cdot n_\Gamma + c \lambda^{1/2} \kappa_{\Gamma}.
\end{align}
Choosing the regularization parameter $\mu>0$ leads to an incorrect speed of the interface motion, 
and according to \eqref{eq:vac}, one should choose $\mu = o(\lambda)$ to keep the consistency error small.
For parameter $\mu>0$ the interface is widened to a transition zone of width $O((\mu\lambda)^{1/2})$.
In order to resolve the smoothed interface in simulations,
the mesh size should therefore be chosen at least as small as $h = O((\mu \lambda)^{1/2}) = o(\lambda)$.
More restrictive assumptions on the mesh size are required to really 
prove a good approximation of the interface evolution; see \cite{FengProhl03,NochettoVerdi97} for details.
If $\lambda$ is small, then a very small mesh size is thus required to obtain a good approximation.

In order to allow for a larger mesh size and to reduce the computational burden, 
the following model for the interface motion has been proposed and analyzed in \cite{Alber15,AlberZhu13}
\begin{align} \label{eq:hyb}
\dt S^{hyb} = \left(\nu \Delta_x S^{hyb} - \psi'(S^{hyb})  - F'(S^{hyb}) \right) |\nabla S^{hyb}|.
\end{align}
This equation, which has been termed \emph{hybrid model} in \cite{Alber15,AlberZhu13}, 
describes the motion of the levelsets of the order parameter $S^{hyb}$ with normal speed proportional to the term in parenthesis. 
Following the arguments of \cite{ClarenzEtAl05,DroskeRumpf04,OsherParagios}, the problem \eqref{eq:hyb}
can again be interpreted as a gradient flow for an associated energy functional, which here reads
\begin{align} \label{eq:ehyb}
E^{hyb}(S) = \int_\Omega \frac{\nu}{2} |\nabla_x S|^2 + \psi(S) +  F(S) \; dx.
\end{align}
The gradient here has to be defined with respect to a metric induced by the solution $S$ itselve; 
we refer to \cite{ClarenzEtAl05,DroskeRumpf04} and to Section~\ref{sec:variational} for details.
Using asymptotic analysis, the propagation speed of the interface has been shown in \cite{Alber15,AlberZhu13} to behave like
\begin{align} \label{eq:vhyb}
v^{hyb}(\nu) = n \cdot [C] \cdot n + \nu^{1/2} \omega_1 \kappa_\Gamma + o(\nu^{1/2}),
\end{align}
where the parameter $\omega_1$ can again be computed from the model data. 

A comparison of the two formulas \eqref{eq:vac} and \eqref{eq:vhyb} reveals that for a proper choice of the model and scaling parameters, the hybrid model leads to the same propagation speed for the interface $\Gamma$ as in the Allen-Cahn model up to higher order terms. 
To obtain a good agreement of the results, one should choose $\nu \approx \lambda$ and in this case width of the smoothed interface in the hybrid model \eqref{eq:hyb} can be shown to behave like $O(\nu^{1/2})=O(\lambda^{1/2})$; see \cite{Alber15,AlberZhu13} for details.
When $\lambda$ is small, the smoothed interface of the hybrid model is thus substantially larger then that of the corresponding Allen-Cahn model and a mesh size of $h=O(\lambda^{1/2})$ should be sufficient to resolve the interface in computations, instead of $h=o(\lambda)$ required for the corresponding Allen-Cahn model. 
The hybrid model should therefore allow to compute the interface evolution on a much coarser mesh, which was the main motivation for the proposal of this method.

Due to the close connection to mean curvature flow, there has been much interest in the numerical approximation for the Allen-Cahn equation \eqref{eq:ac0}. 
Various discretization schemes have been proposed and analyzed, see e.g.  \cite{ChenElliottGardinerZhao98,ShenYang10} and the references given there.
Particular aspects of the efficient numerical approximation have been addressed in \cite{FengSongTangYang13,FengTangYang13,ShenTangYang16}
and a-posteriori error estimates have been derived in \cite{Bartels05,FengWu05,KesslerNochettoSchmidt04}.
The convergence of numerical approximations of the Allen-Cahn equation in the sharp interface limit $\eps \to 0$ towards the generalized motion by mean curvature has been established in \cite{FengProhl03,NochettoVerdi97}; see also \cite{ZhangDu09} for further numerical tests.
The direct numerical approximation of mean curvature flow has also been investigated intensively; see for instance \cite{DeckelnickDziuk95,Walkington96} and refer to \cite{DeckelnickDziukElliot05,Garcke13} for a review about available results and further references.

\medskip

The focus of the current manuscript is somewhat different to these previous works: 
Here we want to study the systematic numerical approximation of a large class of Allen-Cahn type models 
which includes \eqref{eq:ac}, \eqref{eq:ac0}, and \eqref{eq:hyb} as special cases. 
The common feature of all these models is the underlying gradient flow structure with respect to an associated energy functional, and we aim to strictly preserve this structure in the approximation process on the semi-discrete and the fully discrete level.  
With this in mind, we automatically obtain discretization schemes that are uniformly energy stable 
and thus consistent with the second law of thermodynamics.

In this paper we establish well-posedness and qualitative properties of a discretization strategy appropriate for a large class of Allen-Cahn type problems. The models \eqref{eq:ac} and  \eqref{eq:hyb} introduced above will be considered as particular examples. As a by-product of our investigations, we also obtain further numerical evidence for the asymptotic expansions \eqref{eq:vac} and \eqref{eq:vhyb} of the interface velocities.
Our main arguments can be extended to elastic Allen-Cahn type models which describe the evolution of phase interfaces in binary alloys \cite{AlberZhu13,FriedGurtin94,Garcke00}. The required coupling to the additional elasticity system will be investigated in a forth-comming publication.

\medskip

The remainder of the manuscript is organized as follows:
In Section~\ref{sec:variational}, we first introduce a unified formulation of the problems discussed in the introduction and then present a variational characterization of smooth solutions. 
In addition, we verify that the respective energies decrease along solution trajectories which 
is due to the underlying gradient flow structure. 
In Section~\ref{sec:galerkin}, we then discuss the Galerkin approximation of the underlying variational principles. We establish the well-posedness of the resulting semi-discrete schemes 
and verify the energy decay on the discrete level. 
In Section~\ref{sec:time}, we then discuss the subsequent discretization in time by a problem adapted implicit time stepping scheme. We establish the well-posedness of the fully discrete problem and again prove the decay in energy. 
A particular space discretization by finite elements is discussed in Seciton~\ref{sec:implementation},
and some further aspects of the implementation are presented.
For illustration of our theoretical results, we report in Section~\ref{sec:numerics} about some numerical tests,in which we also verify the asymptotic expansions for the interface velocities given in \eqref{eq:vac} and \eqref{eq:vhyb}. 
We close with a short discussion of our results and open problems for future research.

\section{A unified variational framework} \label{sec:variational}

For the rest of the manuscript, we consider the following general model problem:
\begin{align}
c(|\nabla S|) \dt S &= \div_x (\alpha \nabla_x S) - \beta d'(S), \qquad x \in \Omega, \ t>0.  \label{eq:sys1}
\end{align}
Here $\Omega \subset \RR^d$, $1 \le d \le 3$ is a bounded Lipschitz domain, $\alpha,\beta>0$ are model parameters, and the functions $c(\cdot)$, $d(\cdot)$ are given a-priori. For our analysis later on, we will assume that
\begin{itemize}\itemsep1ex
 \item[(A1)] $c \in W^{1,\infty}(\RR)$ and $0 < \underline c \le c(x) \le \overline c < \infty$;
 \item[(A2)] $d \in W^{2,\infty}(\RR)$ with $d(x) \ge 0$, $d(x) \ge \underline d |x| -d_1$, and $|d''(x)| \le \overline d$ for all $x \in \RR$.
\end{itemize}
Some of these technical assumptions are made only for convenience and could further be relaxed
without problems. This will become clear from our analysis below and we will make some remarks in this direction.
To complete the model description, we additionally assume that
\begin{align}
               \alpha \dn S &= 0, \qquad x \in \partial\Omega, \ t>0.  \label{eq:sys2}
\end{align}
Other boundary conditions can also be treated without much difficulty.
The above problem is of quasilinear parabolic type and existence of solutions for appropriate initial values 
$S(0)=S_0$ can be proven with standard arguments; see e.g. \cite{Amann,Ladyzhenskaya,Pazy}.
\begin{notation}
A function $S : C^{1,2}([0,T] \times \overline \Omega)$ is called smooth solution of \eqref{eq:sys1}--\eqref{eq:sys2}, if it satisfies the respective equations in a pointwise sense. 
\end{notation}
Here $C^{1,2}([0,T] \times \overline \Omega)$ denotes the space of continuous functions that are continuously differentiable with respect to $t$ and twice continuously differentiable with respect to $x$. 
Note that acording to our definition, smooth solutions and their spatial derivatives are uniformly bounded. 
\begin{remark}
The Allen-Cahn model \eqref{eq:ac} can be phrased in the form \eqref{eq:sys1} 
by setting  $\alpha=\mu^{1/2} \lambda$, $\beta=\mu^{-1/2}$, $c(x)=(\mu\lambda)^{1/2}/c$, 
and defining the potential as $d(S) = \psi(S) + \mu^{1/2} F(S)$. 
Note that the second derivative $d''(\cdot)$ of the potential is unbounded here. Using maximum principles, the solution $S$ can however be shown to remain bounded and therefore $d(S)$ could be modified for large $S$ in order to satisfy (A2) without changing the solution;
see \cite{FengProhl03} for other appropriate conditions on the potential. 
The hybrid model \eqref{eq:hyb} can also be cast into the form \eqref{eq:sys1} with $\alpha=\nu$, $\beta=1$, $d(s)=\psi(s) +F(S)$, and $c(x) = 1/x$. To avoid problems for vanishing gradient, one can set
$c(x) = 1/\max(\delta,\min(1/\delta,x))$ for some $\delta>0$ instead, and thus obtain a regularized version of \eqref{eq:hyb}. 
%
The limit $\delta \to 0$ may be studied in the framework of $\Gamma$-convergence \cite{DeGiorgi80,Jian99}.
\end{remark}

As a next step, let us define an associated energy functional for the problem \eqref{eq:sys1}--\eqref{eq:sys2} by
\begin{align} \label{eq:energy}
E(S) = \int_\Omega \frac{\alpha}{2} |\nabla S|^2 + \beta d(S) dx.  
\end{align}
Equation \eqref{eq:sys1} can then be interpreted as a gradient flow for this energy with respect to the metric
$g_S(v,w) = \int_\Omega v w\;  c(|\nabla S|) dx$. 
Problem \eqref{eq:sys1} can thus be formally expressed as 
\begin{align} \label{eq:gradflow}
\dt S = - \text{grad}_{g_S} E(S),
\end{align}
which, following \cite{ClarenzEtAl05,DroskeRumpf04,Garcke13}, has to be understood in the sense that
\begin{align} \label{eq:gradflow_var}
\int_\Omega c(|\nabla S|) \dt S v \; dx 
= :g_S(\dt S,v) 
= - \langle \delta E(S), v\rangle
:= -\int_\Omega \alpha \nabla S \nabla v + \beta d'(S) v \; dx 
\end{align}
for all $v \in C_0^\infty(\Omega)$ and $t>0$. The last term is the negative directional derivative of the energy functional in direction $v$.
These formal arguments motivate the following variational principle.
\begin{lemma}[Variational characterization] \label{lem:variational}
Let $S$ be a smooth solution of \eqref{eq:sys1}--\eqref{eq:sys2}. Then 
\begin{align} \label{eq:variational}
\left(c(|\nabla S(t)|) \dt S(t),v\right) + (\alpha \nabla S(t), \nabla v) + (\beta d'(S(t)),v) =0
\end{align}
for all $v \in H^1(\Omega)$ and all $t > 0$. Here $(v,w)=\int_\Omega v w \; dx$ is the scalar product of $L^2(\Omega)$.
\end{lemma}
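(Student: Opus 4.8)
The plan is the classical route: multiply the strong equation \eqref{eq:sys1} by a test function, integrate over $\Omega$, integrate by parts using the boundary condition \eqref{eq:sys2}, and then extend the resulting identity from smooth test functions to all of $H^1(\Omega)$ by density.

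Concretely, fix $t>0$ and let $v\in C^\infty(\overline\Omega)$. Multiplying \eqref{eq:sys1} by $v$ and integrating over $\Omega$ gives
\begin{align*}
\int_\Omega c(|\nabla S(t)|)\,\dt S(t)\,v\;dx = \int_\Omega \div_x\bigl(\alpha\nabla_x S(t)\bigr)\,v\;dx - \int_\Omega \beta d'(S(t))\,v\;dx.
\end{align*}
Since $S\in C^{1,2}([0,T]\times\overline\Omega)$, the vector field $\alpha\nabla_x S(t)$ lies in $C^1(\overline\Omega)^d$, so Green's formula applies on the Lipschitz domain $\Omega$ and yields
\begin{align*}
\int_\Omega \div_x\bigl(\alpha\nabla_x S(t)\bigr)\,v\;dx = -\int_\Omega \alpha\nabla S(t)\cdot\nabla v\;dx + \int_{\partial\Omega}\alpha\,\dn S(t)\,v\;ds.
\end{align*}
By \eqref{eq:sys2} the boundary integral vanishes, and rearranging terms gives \eqref{eq:variational} for every $v\in C^\infty(\overline\Omega)$.

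It remains to pass from $C^\infty(\overline\Omega)$ to $H^1(\Omega)$. Because $\Omega$ is a bounded Lipschitz domain, $C^\infty(\overline\Omega)$ is dense in $H^1(\Omega)$, so it suffices to note that, for the fixed time $t$, each term on the left-hand side of \eqref{eq:variational} is a bounded linear functional of $v$ with respect to $\|\cdot\|_{H^1(\Omega)}$. Indeed, a smooth solution and its spatial derivatives are uniformly bounded on $\overline\Omega$, so by (A1) we have $c(|\nabla S(t)|)\dt S(t)\in L^\infty(\Omega)$ and $\alpha\nabla S(t)\in L^\infty(\Omega)^d$, and since $S(t)$ is bounded and $d\in W^{2,\infty}(\RR)$ also $\beta d'(S(t))\in L^\infty(\Omega)$; each term is therefore bounded by $C\|v\|_{H^1(\Omega)}$ with $C$ depending on $t$. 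Approximating an arbitrary $v\in H^1(\Omega)$ by smooth functions and passing to the limit yields \eqref{eq:variational} in full generality.

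The argument is essentially routine and no genuine difficulty arises; the only points worth a moment of care are (i) the validity of Green's formula, which relies on the definition of a smooth solution (so that $\alpha\nabla_x S(t)$ is $C^1$ up to the boundary) together with the Lipschitz regularity of $\partial\Omega$, and (ii) the continuity of the three functionals on $H^1(\Omega)$, which is exactly where assumptions (A1)--(A2) and the uniform boundedness of smooth solutions enter. Note also that the role of the \emph{natural} boundary condition \eqref{eq:sys2} is precisely that the boundary term drops out for \emph{arbitrary} $v$, which is what permits test functions that need not vanish on $\partial\Omega$.
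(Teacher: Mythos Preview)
Your proof is correct and follows the standard route that the paper evidently has in mind: the lemma is stated without an explicit proof, being motivated only by the preceding formal computation \eqref{eq:gradflow_var}, so your multiply--integrate-by-parts--density argument is exactly the intended justification. Nothing to add.
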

As a direct consequence of the gradient flow structure, we obtain 
\begin{lemma}[Energy decay] \label{lem:decay}
Let $S$ be a smooth solution of \eqref{eq:sys1}--\eqref{eq:sys2}. Then 
\begin{align} \label{eq:decay}
\frac{d}{dt} E(S(t)) = -\int_\Omega c(|\nabla S(t)|) |\dt S(t)|^2 dx \le 0.
\end{align}
\end{lemma}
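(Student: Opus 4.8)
The plan is to differentiate the energy $E(S(t))$ in time using the chain rule, and then to substitute the variational identity from Lemma~\ref{lem:variational} with the test function $v=\dt S(t)$. Since $S$ is a smooth solution in the sense of the Notation above, $S(t)\in C^{1,2}([0,T]\times\overline\Omega)$, so $S(t)$ and $\dt S(t)$ lie in $H^1(\Omega)$, the spatial derivatives are uniformly bounded, and differentiation under the integral sign is justified. Concretely, I would write
\begin{align*}
\frac{d}{dt} E(S(t))
= \int_\Omega \alpha \nabla S(t)\cdot \nabla (\dt S(t)) + \beta d'(S(t))\, \dt S(t)\; dx
= (\alpha \nabla S(t), \nabla(\dt S(t))) + (\beta d'(S(t)), \dt S(t)).
\end{align*}

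Next I would apply Lemma~\ref{lem:variational}, which is legitimate since $v=\dt S(t)\in H^1(\Omega)$ for every fixed $t>0$. Rearranging \eqref{eq:variational} with this choice of $v$ gives
\begin{align*}
(\alpha \nabla S(t), \nabla(\dt S(t))) + (\beta d'(S(t)), \dt S(t)) = -\left( c(|\nabla S(t)|)\, \dt S(t), \dt S(t)\right) = -\int_\Omega c(|\nabla S(t)|)\, |\dt S(t)|^2\; dx.
\end{align*}
Combining the two displays yields the claimed identity. The sign assertion $\le 0$ then follows immediately from assumption (A1), since $c(|\nabla S(t)|)\ge \underline c > 0$ pointwise, so the integrand is nonnegative.

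The only point requiring a little care is the justification of the time differentiation of $E(S(t))$, i.e. interchanging $\tfrac{d}{dt}$ with the spatial integral and applying the chain rule to the integrands $\tfrac{\alpha}{2}|\nabla S|^2$ and $\beta d(S)$. For the first term this is routine given $S\in C^{1,2}$. For the second term one uses that $d\in W^{2,\infty}(\RR)$ by (A2), hence $d'$ is Lipschitz, so $t\mapsto d(S(x,t))$ is $C^1$ with derivative $d'(S)\,\dt S$, and the difference quotients are dominated uniformly on the bounded domain $\Omega$ by the boundedness of $S$, $\dt S$ and $d'$; dominated convergence then legitimizes passing the limit inside. I do not expect any real obstacle here — this is essentially the formal computation \eqref{eq:gradflow}–\eqref{eq:gradflow_var} made rigorous, with Lemma~\ref{lem:variational} supplying exactly the needed test-function identity.
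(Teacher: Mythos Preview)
Your proof is correct and follows essentially the same approach as the paper: differentiate $E(S(t))$ via the chain rule, then test the variational identity of Lemma~\ref{lem:variational} with $v=\dt S(t)$ to obtain the dissipation term, and conclude nonpositivity from the sign of $c(\cdot)$. The only difference is that you supply more detailed justification for differentiating under the integral sign, which the paper omits by simply invoking smoothness of $S$.
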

\begin{proof}
Since $S$ is smooth, we can apply the chain rule to get 
\begin{align*}
\frac{d}{dt} E(S(t)) 
&= \int_\Omega \alpha \nabla S(t) \nabla (\dt S(t)) + \beta d'(S(t)) \dt S(t) dx = (*).  
\end{align*}
By Lemma~\ref{lem:variational}, any smooth solution of \eqref{eq:sys1}--\eqref{eq:sys2} also satisfies the variational principle \eqref{eq:variational}, and testing this variational equation with $v = \dt S(t)$ immediately yields 
\begin{align*}
(*) 
&= (\alpha \nabla S(t), \nabla \dt S(t)) + (\beta d'(S(t)), \dt S(t)) \\
&= -( c(|\nabla S(t)|) \dt S(t), \dt S(t)) = -\int_\Omega c(|\nabla S(t)|) |\dt S(t)|^2 dx \le 0.
\end{align*}
For the last step we only had to use that $c(\cdot)$ is non-negative.
\end{proof}

\section{Galerkin approximation in space} \label{sec:galerkin}

Motivated by the results of the previous section, we now consider the Galerkin approximation of the variational principle \eqref{eq:variational} to construct semi-discretizations in space. 
Let us choose some finite dimensional subspace $V_h \subset H^1(\Omega)$ and consider the following discrete variational problem.
\begin{problem}[Galerkin semi-discretization] \label{prob:semi} $ $\\
Let $S_{h,0} \in V_h$ and $T>0$ be given. Find $S_h \in C^1([0,T];V_h)$ such that $S_h(0) = S_{h,0}$ and
\begin{align} \label{eq:variationalh}
\Big(c(|\nabla S_h(t)|) \dt S_h(t),v_h\Big) + (\alpha \nabla S_h(t), \nabla v_h) + (\beta d'(S_h(t)),v_h) = 0 
\end{align}
for all test functions $v_h \in V_h$ and for all $ 0 \le t \le T$. 
\end{problem}
Since we employ a conforming Galerkin approximation, the gradient flow structure and thus also the energy decay estimates 
are inherited automatically by the semi-discrete problem.
\begin{lemma} \label{lem:decayh}
Let $S_h$ denote a solution of Problem~\ref{prob:semi}. Then 
\begin{align} \label{eq:decayh}
\frac{d}{dt} E(S_h(t)) = -\int_\Omega c(|\nabla S_h(t)|) |\dt S_h(t)|^2 dx \le 0.
\end{align}
\end{lemma}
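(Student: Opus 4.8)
The plan is to repeat, essentially verbatim, the argument used for Lemma~\ref{lem:decay}, exploiting that the Galerkin approximation is conforming so that the energy identity transfers literally to the discrete setting.

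First I would differentiate $E(S_h(t))$ in time. Since $V_h$ is finite dimensional and $S_h \in C^1([0,T];V_h)$, the map $t \mapsto S_h(x,t)$ is continuously differentiable for each fixed $x \in \overline\Omega$, and $S_h$, $\nabla S_h$, $\dt S_h$ are uniformly bounded on $\overline\Omega \times [0,T]$. As $d'$ is Lipschitz by assumption (A2), hence $d \in C^1$, the composition $t \mapsto d(S_h(x,t))$ is differentiable with derivative $d'(S_h(x,t))\,\dt S_h(x,t)$, which is again bounded on $\overline\Omega\times[0,T]$; together with the smoothness of the quadratic term this justifies differentiation under the integral sign and yields
\begin{align*}
\frac{d}{dt} E(S_h(t)) = (\alpha \nabla S_h(t), \nabla \dt S_h(t)) + (\beta d'(S_h(t)), \dt S_h(t)).
\end{align*}

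Next I would observe that, again because $V_h$ is finite dimensional, $\dt S_h(t) \in V_h$ for every $t$, so it is an admissible test function in the semi-discrete variational identity \eqref{eq:variationalh}. Choosing $v_h = \dt S_h(t)$ there gives
\begin{align*}
(\alpha \nabla S_h(t), \nabla \dt S_h(t)) + (\beta d'(S_h(t)), \dt S_h(t)) = -\big(c(|\nabla S_h(t)|)\,\dt S_h(t), \dt S_h(t)\big),
\end{align*}
and combining the two displays proves \eqref{eq:decayh}; non-negativity of the right hand side follows from $c(\cdot) \ge \underline c > 0$ in (A1), although mere non-negativity of $c$ already suffices. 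There is no genuine obstacle here: the statement is the exact discrete counterpart of Lemma~\ref{lem:decay}, and conformity $V_h \subset H^1(\Omega)$ is precisely what legitimizes the test function $v_h = \dt S_h(t)$. The only two points needing a line of care are the chain rule when $d'$ is merely Lipschitz and the interchange of differentiation and integration, both immediate from the finite dimensionality of $V_h$ and the assumed $C^1$-in-time regularity of $S_h$.
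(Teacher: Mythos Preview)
Your proof is correct and follows exactly the approach of the paper, whose own proof simply states that the assertion follows with literally the same arguments as in Lemma~\ref{lem:decay}; you have merely spelled those arguments out with the appropriate care about the chain rule, differentiation under the integral, and the admissibility of $\dt S_h(t)\in V_h$ as a test function. One trivial slip: you mean non-\emph{positivity} of the right-hand side, not non-negativity.
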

\begin{proof}
The assertion follows with literally the same arguments as in Lemma~\ref{lem:decay}. 
\end{proof}

As a next step, let us establish the well-posedness of the semi-discrete problem. 
\begin{lemma}[Well-posedness]
For any $S_{h,0} \in V_h$ and $T>0$, Problem~\ref{prob:semi} has a unique solution. 
\end{lemma}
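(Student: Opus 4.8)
The plan is to reformulate Problem~\ref{prob:semi} as an initial value problem for a system of ordinary differential equations in the coefficient space $\RR^N$, $N=\dim V_h$, and then invoke the Picard–Lindelöf theorem together with an a-priori bound to rule out finite-time blow-up. Fix a basis $\{\varphi_1,\dots,\varphi_N\}$ of $V_h$ and write $S_h(t)=\sum_j s_j(t)\varphi_j$ with coefficient vector $\mathbf{s}(t)\in\RR^N$. Substituting $v_h=\varphi_i$ into \eqref{eq:variationalh} gives $M(\mathbf{s})\,\mathbf{s}' = \mathbf{f}(\mathbf{s})$, where $M(\mathbf{s})_{ij} = \big(c(|\nabla S_h|)\varphi_j,\varphi_i\big)$ is a \emph{mass-type} matrix depending nonlinearly on $\mathbf{s}$ through $|\nabla S_h|$, and $\mathbf{f}(\mathbf{s})_i = -(\alpha\nabla S_h,\nabla\varphi_i) - (\beta d'(S_h),\varphi_i)$.

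The first key step is to show $M(\mathbf{s})$ is invertible for every $\mathbf{s}$, with a bound on $M(\mathbf{s})^{-1}$ locally uniform in $\mathbf{s}$. By (A1), for any $\xi\in\RR^N$ one has $\xi^\top M(\mathbf{s})\xi = \int_\Omega c(|\nabla S_h|)\,|\sum_j\xi_j\varphi_j|^2\,dx \ge \underline c\,\|\sum_j\xi_j\varphi_j\|_{L^2(\Omega)}^2 \ge \underline c\,\lambda_{\min}\,|\xi|^2$, where $\lambda_{\min}>0$ is the smallest eigenvalue of the (fixed) Gram matrix of the basis; hence $M(\mathbf{s})$ is uniformly positive definite and $\|M(\mathbf{s})^{-1}\|\le (\underline c\,\lambda_{\min})^{-1}$. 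The second key step is to verify that $\mathbf{s}\mapsto M(\mathbf{s})^{-1}\mathbf{f}(\mathbf{s})$ is locally Lipschitz on $\RR^N$: the entries of $M(\mathbf{s})$ are Lipschitz in $\mathbf{s}$ because $c$ is Lipschitz by (A1) and $\mathbf{s}\mapsto|\nabla S_h(x)|$ is Lipschitz (pointwise, with constant controlled by $\max_j\|\nabla\varphi_j\|_{L^\infty}$, which is finite on the finite-dimensional space $V_h$ — on a fixed mesh $\nabla\varphi_j\in L^\infty$); the entries of $\mathbf{f}(\mathbf{s})$ are smooth in $\mathbf{s}$ since $\nabla S_h$ depends linearly on $\mathbf{s}$ and $d'$ is Lipschitz by (A2) ($|d''|\le\overline d$); finally $\mathbf{s}\mapsto M(\mathbf{s})^{-1}$ is locally Lipschitz because matrix inversion is smooth on the open set of invertible matrices and we just showed $M(\mathbf{s})$ stays invertible. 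Picard–Lindelöf then yields a unique local $C^1$ solution $\mathbf{s}(t)$ on a maximal interval $[0,T_{\max})$, equivalently a unique $S_h\in C^1([0,T_{\max});V_h)$ with $S_h(0)=S_{h,0}$.

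The remaining step is to show $T_{\max}>T$, i.e. to preclude finite-time blow-up. This is where the energy structure does the work: by Lemma~\ref{lem:decayh}, $E(S_h(t))\le E(S_{h,0})$ for all $t\in[0,T_{\max})$, and from the definition \eqref{eq:energy} together with $d\ge 0$ (A2) this gives $\tfrac{\alpha}{2}\|\nabla S_h(t)\|_{L^2(\Omega)}^2 \le E(S_{h,0})$; combined with the coercivity estimate $\beta d(S_h)\ge \beta(\underline d|S_h| - d_1)$ from (A2) and the Poincaré-type inequality on $V_h$ (or directly controlling the mean of $S_h$ via the $\underline d|x|$ growth), one obtains an a-priori bound $\|S_h(t)\|_{H^1(\Omega)}\le C(E(S_{h,0}),|\Omega|,\alpha,\beta,\underline d,d_1)$, hence $|\mathbf{s}(t)|\le C'$ for all $t<T_{\max}$. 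A bounded trajectory of an ODE with a locally Lipschitz (in particular continuous) right-hand side cannot blow up in finite time, so $T_{\max}=\infty$ and in particular the solution exists uniquely on $[0,T]$. I expect the main obstacle to be the a-priori bound: one must be careful that the energy only controls $\|\nabla S_h\|_{L^2}$ directly, so the $\underline d|x|-d_1$ lower bound in (A2) is genuinely needed to also control $\|S_h\|_{L^2}$ (equivalently the mean value $\int_\Omega S_h\,dx$); everything else is routine once the $M(\mathbf{s})$-invertibility is in place.
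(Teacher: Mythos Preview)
Your proposal is correct and follows essentially the same route as the paper: rewrite the Galerkin system as an ODE $M(y)y'+Ay+B(y)=0$, use (A1) to get $M(y)$ uniformly positive definite and (A1)--(A2) for Lipschitz continuity so that Picard--Lindel\"of applies, then invoke the energy decay of Lemma~\ref{lem:decayh} together with the growth bound $d(x)\ge\underline d|x|-d_1$ and a Poincar\'e-type inequality to obtain the uniform $H^1$ a-priori bound that rules out blow-up. The only quibble is your appeal to $\|\nabla\varphi_j\|_{L^\infty}<\infty$, which is not guaranteed for an arbitrary $V_h\subset H^1(\Omega)$; the Lipschitz estimate for $M(\cdot)$ goes through anyway via H\"older and equivalence of norms on the finite-dimensional space, which is all the paper (implicitly) uses.
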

\begin{proof}
After choosing a basis for $V_h$, we can rewrite the discrete problem as initial value problem
\begin{align*}
M(y) y'(t) + A y + B(y) = 0, \qquad y(0)=y_0,
\end{align*}
where $y(t)$ here denotes the coordinate vector when expanding $S_h(t)$ in the chosen basis. 
Due to assumptions (A1)--(A2) on the coefficients and since $V_h$ is finite dimensional, 
the matrix functions $M(\cdot)$ and $B(\cdot)$
can be seen to be Lipschitz continuous. From the lower bounds on $c(\cdot)$ we further obtain that $M(y)$ is symmetric positive definite.  Existence of a unique local solution then follows from the Picard-Lindelöf theorem. 
From \eqref{eq:energy} and \eqref{eq:decayh}, we can see that
\begin{align*}
C' := E(S_{0,h}) \ge E(S_h(t)) = \frac{\alpha}{2} \|\nabla S_h(t)\|_{L^2(\Omega)}^2 + \beta \|d(S_h(t))\|_{L^1(\Omega)}.
\end{align*}
Here we used that $d(\cdot) \ge 0$ due to assumption (A2). 
This yields $\alpha \|\nabla S_h(t)\|_{L^2\Omega)}^2 \le 2 C'$ 
and by the growth condition and the positivity assumption for $d$, we further obtain
\begin{align*}
C' \ge \beta \|d(S_h(t))\|_{L^1(\Omega)} \ge \beta \underline d \|S_h(t)\|_{L^1(\Omega)}.
\end{align*}
By the Poincar\'e inequality and positivity of $\alpha,\beta$, we thus get $\|S_h(t)\|_{H^1(\Omega)} \le C''$
with constant $C''$ only depending on the domain, on the bounds in the assumptions, and on the energy of the initial value.
This implies that $y(t)$ remains uniformly bounded and hence the solution can be extended uniquely to arbitrarily large $t>0$.
\end{proof}

\begin{remark}
Under assumptions (A1)--(A2), we thus obtain global existence of a unique semi-discrete solution $S_h$.
The constant $C''$ in the a-priori bound $\|S_h(t)\|_{H^1(\Omega)} \le C''$ only depends on the initial energy $E(S_{h,0})$, on the values of $\alpha$ and $\beta$, and on the lower bound $\underline d$ for the growth rate of the potential $d(\cdot)$. 
Let us note that the energy decay estimate also provides a uniform bound for the time derivative of the solution which might be useful for a full convergence analysis for the Galerkin approximation; we refer to \cite{ChenElliottGardinerZhao98,FengProhl03} for results in this direction.
\end{remark}

\section{Time discretization} \label{sec:time}

As a second step in the approximation process, 
let us now discuss the discretization in time. 
Given some time step $\tau>0$, we define $t^n = n \tau$ for $n \ge 0$, and we denote by
\begin{align*}
\dtau S^n = \frac{1}{\tau} (S^{n} - S^{n-1}) 
\end{align*}
the backward difference quotient at time $t^n$. As will become clear from our analysis, non-uniform time steps could be considered as well.
For the time discretization of the Galerkin approximation defined in Problem~\ref{prob:semi}, 
we then consider the following implicit time-stepping scheme.
\begin{problem}[Fully discrete scheme] \label{prob:full} $ $\\
Set $S_h^0 = S_{h,0}$ with initial value as in Problem~\ref{prob:semi}. 
For any $n \ge 1$, find $S_h^n \in V_h$ such that 
\begin{align} \label{eq:variationalhh}
\Big(c(|\nabla S_h^{n-1}|) \dtau S_h^n,v_h\Big) + (\alpha \nabla S_h^n, \nabla v_h) + \Big(\beta \frac{d(S_h^n)-d(S_h^{n-1})}{S_h^n-S_h^{n-1}},v_h\Big) = 0 
\end{align}
for all $v_h \in V_h$. We set $\frac{d(S_h^n)-d(S_h^{n-1})}{S_h^n-S_h^{n-1}} = d'(S_h^n)$ wherever $S_h^n-S_h^{n-1}=0$.
\end{problem}
Let us note that a similar treatment of the nonlinear term has been proposed in the context of mean-curvature flow in \cite{Walkington96}. 
The particular of the discrete variational problem \eqref{eq:variationalhh} allows us to establish a decay estimate for the energy of the fully discrete approximation with very similar arguments as used for the analysis on the continuous level. 
\begin{lemma} \label{lem:decayhh}
Let $\{S_h^n\}_{n \ge 0} \subset V_h$ denote a solution of Problem~\ref{prob:full}.
Then 
\begin{align*}
\dtau E(S_h^n) 
&= \frac{1}{\tau} \Big(E(S_h^n) - E(S_h^{n-1})\Big)  \\
&= -\int_\Omega c(|\nabla S_h^{n-1}|) |\dtau S_h^n|^2 dx - \frac{\alpha}{2\tau} \|\nabla S_h^n - \nabla S_h^{n-1}\|^2_{L^2(\Omega)} \le 0.
\end{align*}
\end{lemma}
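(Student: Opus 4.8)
The plan is to reproduce, at the fully discrete level, the computation behind Lemma~\ref{lem:decay}, with $\dt S$ replaced by the difference quotient $\dtau S_h^n$ and using that $\dtau S_h^n \in V_h$ is an admissible test function in \eqref{eq:variationalhh}.

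First I would expand $E(S_h^n) - E(S_h^{n-1})$ term by term. For the Dirichlet part I use the square-completion identity $\|a\|_{L^2(\Omega)}^2 - \|b\|_{L^2(\Omega)}^2 = 2(a,a-b) - \|a-b\|_{L^2(\Omega)}^2$ with $a = \nabla S_h^n$ and $b = \nabla S_h^{n-1}$; this is exactly what generates the additional dissipative term $-\tfrac{\alpha}{2\tau}\|\nabla S_h^n-\nabla S_h^{n-1}\|_{L^2(\Omega)}^2$. For the potential part I write, pointwise in $\Omega$, $d(S_h^n)-d(S_h^{n-1}) = q^n\,(S_h^n-S_h^{n-1})$ with $q^n := \bigl(d(S_h^n)-d(S_h^{n-1})\bigr)/(S_h^n-S_h^{n-1})$, which is a genuine identity — also on the set $\{S_h^n = S_h^{n-1}\}$, where both sides vanish and $q^n$ is defined as $d'(S_h^n)$ per Problem~\ref{prob:full}. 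Dividing by $\tau$ and using $S_h^n-S_h^{n-1} = \tau\,\dtau S_h^n$ yields
\[
\dtau E(S_h^n) = \alpha\bigl(\nabla S_h^n,\nabla \dtau S_h^n\bigr) + \bigl(\beta\,q^n,\dtau S_h^n\bigr) - \frac{\alpha}{2\tau}\|\nabla S_h^n - \nabla S_h^{n-1}\|_{L^2(\Omega)}^2 .
\]

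Second, since $\dtau S_h^n = \tau^{-1}(S_h^n - S_h^{n-1}) \in V_h$, I may take $v_h = \dtau S_h^n$ in \eqref{eq:variationalhh}, which gives $\alpha(\nabla S_h^n,\nabla \dtau S_h^n) + (\beta q^n,\dtau S_h^n) = -\bigl(c(|\nabla S_h^{n-1}|)\dtau S_h^n,\dtau S_h^n\bigr)$. Substituting this into the previous display produces precisely the asserted formula, and the inequality $\dtau E(S_h^n) \le 0$ follows at once from $c \ge \underline c > 0$ (assumption (A1)), which makes both terms on the right-hand side non-positive.

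I do not expect a real obstacle here: the statement is a discrete energy identity and the argument is purely algebraic once the scheme is in hand. The only two points needing a little care are (i) the square-completion step for the Dirichlet term, which is responsible for unconditional energy stability (no restriction on $\tau$) and for the second negative contribution, and (ii) verifying that the secant discretization of $d'$ in \eqref{eq:variationalhh}, together with the convention on $\{S_h^n=S_h^{n-1}\}$, turns the discrete chain rule into an \emph{exact} identity rather than a first-order approximation — this is what lets the nonlinear term telescope cleanly. Both are elementary.
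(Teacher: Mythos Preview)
Your proposal is correct and follows essentially the same approach as the paper: the paper also expands $E(S_h^n)-E(S_h^{n-1})$ via the square-completion identity for the Dirichlet part, rewrites the potential difference using the secant quotient, and then tests \eqref{eq:variationalhh} with $v_h = S_h^n - S_h^{n-1}$ (equivalently, your $v_h = \dtau S_h^n$ up to the factor $\tau$) to eliminate the remaining two terms. Your commentary on why the secant discretization makes the discrete chain rule exact is exactly the key observation.
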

\begin{proof}
Using definition \eqref{eq:energy}, we can decompose the discrete energy as
\begin{align*}
&E(S_h^n) - E(S_h^{n-1}) 
= \frac{\alpha }{2} \Big( (\nabla S_h^n,\nabla S_h^n) - (\nabla S_h^{n-1},\nabla S_h^{n-1})\Big) + \beta\Big(d(S_h^n) - d(S_h^{n-1}),1\Big)\\
&= \left(\alpha \nabla S_h^n,\nabla S_h^n - \nabla S_h^{n-1}\right) 
  - \frac{\alpha}{2} \|\nabla S_h^n - \nabla S_h^{n-1}\|^2 + \left(\beta \frac{d(S_h^n)-d(S_h^{n-1})}{S_h^n-S_h^{n-1}},S_h^n-S_h^{n-1}\right). 
\end{align*}
We now utilize the fully discrete variational principle with $v_h = S_h^n - S_h^{n-1}$ to replace the first and last term together by $-\tau (c(|\nabla S_h^{n-1}|) \dtau S_h^n, \dtau S_h^n)$ which directly yields the result. 
\end{proof}

The second term on the right hand side of the above energy estimate stems from numerical dissipation of the implicit time-stepping scheme. The particular time discretization thus enhances the stability of the numerical solution.
As further theoretical backup, we next establish the well-posedness of the fully discrete scheme.
\begin{lemma}
Let $S_h^{n-1} \in V_h$ be given. Then \eqref{eq:variationalhh} has at least one solution $S_h^n \in V_h$, 
and the solution is unique for all time steps $0<\tau \le \tau_0$ with some $\tau_0$ sufficiently small.
\end{lemma}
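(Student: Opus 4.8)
The plan is to view \eqref{eq:variationalhh} as a finite-dimensional nonlinear equation for $S_h^n\in V_h$ with $S_h^{n-1}\in V_h$ fixed, to prove existence for every $\tau>0$ by a Brouwer-type argument, and to prove uniqueness for $\tau$ small by a coercivity estimate in which the discrete mass term dominates the nonlinearity. A convenient preliminary step is to rewrite the secant term: since $d\in W^{2,\infty}(\RR)\subset C^{1,1}(\RR)$,
\begin{align*}
\phi(a,b) := \frac{d(b)-d(a)}{b-a} = \int_0^1 d'\big(a+t(b-a)\big)\,dt ,
\end{align*}
with the convention $\phi(a,a)=d'(a)$ built in, so that $\phi$ is continuous on $\RR^2$, and, by assumption (A2),
\begin{align*}
|\phi(a,b)-\phi(a,\tilde b)| \le \tfrac12\,\overline d\,|b-\tilde b| ,
\end{align*}
i.e.\ $S\mapsto\phi(S_h^{n-1},S)$ is Lipschitz from $V_h$ into $L^2(\Omega)$.

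For existence I would introduce the operator $F:V_h\to V_h$, defined via the $L^2$-scalar product by
\begin{align*}
(F(S),v_h) := \tfrac{1}{\tau}\big(c(|\nabla S_h^{n-1}|)(S-S_h^{n-1}),v_h\big) + \alpha(\nabla S,\nabla v_h) + \beta\big(\phi(S_h^{n-1},S),v_h\big),
\end{align*}
so that $S_h^n$ solves \eqref{eq:variationalhh} if and only if $F(S_h^n)=0$; by the observations above and (A1), $F$ is continuous. Testing with $v_h=S-S_h^{n-1}$, using $\phi(S_h^{n-1},S)\,(S-S_h^{n-1})=d(S)-d(S_h^{n-1})$, the positivity and the linear growth bound for $d$ in (A2), and $c\ge\underline c>0$, one gets
\begin{align*}
(F(S),S-S_h^{n-1}) \ge \tfrac{\underline c}{\tau}\|S-S_h^{n-1}\|_{L^2(\Omega)}^2 + \tfrac{\alpha}{2}\|\nabla S\|_{L^2(\Omega)}^2 + \beta\underline d\,\|S\|_{L^1(\Omega)} - C
\end{align*}
with $C$ depending only on $S_h^{n-1}$, the data, and $\Omega$. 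Via the Poincar\'e-type inequality $\|S\|_{H^1(\Omega)}\le c_\Omega(\|\nabla S\|_{L^2(\Omega)}+\|S\|_{L^1(\Omega)})$ already used for the semi-discrete well-posedness, the right-hand side tends to $+\infty$ as $\|S\|_{H^1(\Omega)}\to\infty$; hence, for the shifted map $W\mapsto F(W+S_h^{n-1})$, one has $(F(W+S_h^{n-1}),W)>0$ on a sufficiently large sphere $\{\|W\|_{H^1(\Omega)}=\rho\}\subset V_h$. Since $\dim V_h<\infty$, the standard corollary of Brouwer's fixed point theorem yields a zero of this map, i.e.\ a solution $S_h^n$. (Equivalently, $S_h^n$ may be obtained as a minimizer over $V_h$ of the coercive $C^1$ functional whose Euler-Lagrange equation is \eqref{eq:variationalhh}.)

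For uniqueness, let $S_h^n$ and $\tilde S_h^n$ both solve \eqref{eq:variationalhh} for the same $S_h^{n-1}$. Subtracting the two equations and testing with $v_h=e:=S_h^n-\tilde S_h^n$, then using $c\ge\underline c$ and the Lipschitz bound $|\phi(S_h^{n-1},S_h^n)-\phi(S_h^{n-1},\tilde S_h^n)|\le\tfrac12\overline d\,|e|$, gives
\begin{align*}
0 \ge \Big(\tfrac{\underline c}{\tau}-\tfrac{\beta\overline d}{2}\Big)\|e\|_{L^2(\Omega)}^2 + \alpha\|\nabla e\|_{L^2(\Omega)}^2 ,
\end{align*}
so $e\equiv 0$ whenever $\tau<\tau_0:=2\underline c/(\beta\overline d)$, which proves uniqueness for all $0<\tau\le\tau_0$ (or any slightly smaller threshold). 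I expect the only genuine obstacle here to be the non-monotonicity of the secant nonlinearity $\phi(S_h^{n-1},\cdot)$: unlike $d'$, it is the derivative of a convex potential only if $d$ itself is convex, so unconditional uniqueness cannot be expected. The integral representation above is precisely what converts the second-derivative bound $|d''|\le\overline d$ from (A2) into the Lipschitz constant $\overline d/2$, and this lack of monotonicity is then controlled by the coercivity $\underline c/\tau$ of the discrete mass operator for $\tau$ small; existence, by contrast, only requires continuity and coercivity of $F$ and is insensitive to monotonicity.
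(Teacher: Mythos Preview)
Your argument is correct and follows essentially the same route as the paper: existence via a Brouwer-type argument based on the coercivity inherited from the discrete energy estimate of Lemma~\ref{lem:decayhh}, and uniqueness by subtracting two solutions, testing with their difference, and using the integral representation of the secant quotient together with the bound $|d''|\le\overline d$ from (A2) to control the nonlinear term by the discrete mass term for small~$\tau$. Your exposition is more detailed on the existence side (the paper only sketches it in one sentence) and your Lipschitz constant $\tfrac12\overline d$ is slightly sharper than the paper's $\overline d$, but the structure of the proof is the same.
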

\begin{proof}
Existence of a solution $S_h^n$ follows from the energy estimate given in the previous section and the Brouwer fixed point theorem, and we know that $\|S^n\|_{H^1(\Omega)} \le C''$.
Now let $S_h^n$ and $\tilde S_h^n$ be two solutions of \eqref{eq:variationalhh}. 
Then testing with $v_h=S_h^n - \tilde S_h^n$ and
using assumption (A1) leads to
\begin{align*}
\frac{\underline c}{\tau} \|S_h^n - \tilde S_h^n\|_{L^2(\Omega)}^2 &+ \alpha  \|\nabla S_h^n - \nabla \tilde S_h^n\|_{L^2(\Omega)}^2 \\
&\le \beta \|D(S_h^n,S_h^{n-1}) - D(\tilde S_h^n,S_h^{n-1})\|_{L^2(\Omega)} \|S_h^n - \tilde S_h^n\|_{L^2(\Omega)}.
\end{align*}
For ease of notation, we utilized the symbol 
\begin{align} \label{eq:DS}
D(S,S^{n-1}) = \frac{d(S) - d(S^{n-1})}{S-S^{n-1}}
\end{align} 
here to abbreviate the difference quotient.
By the fundamental theorem of calculus and using the bounds of assumption (A2) for the coefficient $d(\cdot)$, 
we further get
\begin{align*}
|D(S^n,S^{n-1}) - D(\tilde S^n,S^{n-1})| 
&\le \int_0^1 \big| d'(S^{n-1} + \xi (S^n - S^{n-1})) - d'(S^{n-1} + \xi (\tilde S^n  - S^{n-1})) \big|\; d\xi \\
&\le \max \{|d''(S)| : S \in \RR\} |S^n - \tilde S^n| \le \overline d |S^n - \tilde S^n|.
\end{align*}
A combination of the two estimates then directly leads to 
\begin{align*}
\frac{\underline c}{\tau} \|S_h^n - \tilde S_h^n\|_{L^2(\Omega)}^2 + \alpha  \|\nabla S_h^n - \nabla \tilde S_h^n\|_{L^2(\Omega)}^2
\le \overline d \|S_h^n - \tilde S_h^n\|_{L^2(\Omega)}^2.
\end{align*}
For $\tau>0$ small enough, the right hand side can be absorbed by the terms on the left side 
and we thus also obtain the uniqueness of the solution.
\end{proof}

Before we proceed, let us briefly indicate at this point how the assumption (A1)--(A2) could be possibly relaxed,
which can be deduced by inspection of the previous proof.
\begin{remark} \label{rem:ass}
The bounds for $|d''(S)|$ are not required for all $S \in \RR$ here, but only for values $S$ that are actually attained during the evolution. 
If a bound $|S_h^n| \le C_h''$ is available, which may be proved by a discrete comparison principles \cite{ShenTangYang16}, 
then one could replace the constant $\overline d$ in the above proof by $\overline d' = \sup \{ |d''(S)| : |S| \le C_h''\}$.
If the space $V_h$ admits an inverse inequality 
\begin{align*} 
\|v_h\|_{L^\infty(\Omega)} \le C_h \|v_h\|_{L^2(\Omega)} \qquad \text{for all } v_h \in V_h, 
\end{align*}
then the a-priori estimate $\|S_h^n\|_{H^1(\Omega)} \le C^{''}$ implies $|S_h^n| \le C_h^{''}$ with $C_h'' = C_h C^{''}$.
The constant $C_h$ can be estimated explicitly for many discretization schemes, e.g., for finite element methods which discussed in the following section.
In one space dimension, the above inverse inequality holds true for any choice of $V_h$ due to the continuous embedding of $H^1$ into $L^\infty$.
More general conditions on the potential $d(\cdot)$ have also been considered in \cite{FengProhl03}.
\end{remark}
The considerations of this section show that we can expect unique solutions and global well-posedness for the numerical approximations of the model problems \eqref{eq:ac} and \eqref{eq:hyb},
if the time step $\tau$ is chosen sufficiently small. This is also observed in our computations.

\section{Details on the implementation} \label{sec:implementation}

Before we turn to numerical tests, let us briefly discuss some details of the implementation that will be used in our computations later on. This particularly involves the choice of the approximation spaces $V_h$ and the solution of the nonlinear systems \eqref{eq:variationalhh}
in every time step.

\subsection{A finite element method}
Let $T_h=\{T\}$ denote a non-overlapping partition of the domain $\Omega \subset \RR^d$, $d=2,3$ 
into triangles or tetrahedra. We assume that the partition is conforming and uniformly shape-regular in the usual sense \cite{Ciarlet78,ErnGuermond04}. We then choose
\begin{align*}
V_h = \{v_h \in C(\Omega) : v|_T \in P_k(T)\} \subset H^1(\Omega)
\end{align*}
as the space of continuous piecewise polynomials of degree $k \ge 1$.
In our numerical tests we will actually only utilize the lowest order case $k=1$,
but the methods can in principle be extended to higher order without difficulty.
Similar approximations in space have been considered in \cite{ChenElliottGardinerZhao98,DuNicolaides91}.

\subsection{Nonlinear potential}

Without loss of generality, one may assume that the potential $d(\cdot)$ is a polynomial function. 
Otherwise, $d(\cdot)$ can be replaced by a polynomial and due to the Weierstrass theorem, the error of this approximation can 
be made arbitrarily small on bounded intervals.
In our numerical tests, we will utilize a function of the form 
\begin{align*}
d(S) = \psi(S) + F(S) = 4 S^2 (1-S)^2 + C S, \qquad C \in \RR. 
\end{align*}
The first part is the usual double-well potential and the linear term amounts to external forces. 
Note that the potential $d(\cdot)$ does not strictly satisfy the assumptions (A2), in particular, the upper bounds on the second derivative
are violated. 
As noted in Remark~\ref{rem:ass}, this assumption was made for convenience and is not required as long as $S$ stays uniformly bounded.
For the above choice of the potential $d(S)$, the difference quotient arising in \eqref{eq:variationalhh}
can be expressed as 
\begin{align*}
D(S,S^{n-1}) &= 
\frac{d(S) - d( S^{n-1})}{S - S^{n-1}} \\
&= 4 (S^3 + S^2 (S^{n-1} - 2) + S (S^{n-1}-1)^2 + S^{n-1} (S^{n-1}-1)^2 ) + C.   
\end{align*}
The nonlinear problem \eqref{eq:variationalhh} that has to be solved in every time step of the fully discrete scheme thus 
  has a polynomial nonlinearity and is non-degenerate. Standard iterative methods can therefore be utilized for an efficient solution.

\subsection{Iterative solver}

The following fixed-point iteration will be used in our numerical tests for the 
solution of the nonlinear problem that arises in every time step of the fully discrete scheme. 
\begin{problem}[Fixed-point iteration]  \label{prob:iter}
Let $S_h^{n-1} \in V_h$ and $\gamma \ge 0$, $\tau>0$ be given and define $c^{n-1} = c(|\nabla S_h^{n-1}|)$.
Further set $\tilde S_h^{n,0}=S_h^{n-1}$ and for $k \ge 1$, find $S_h^{n,k} \in V_h$ such that 
\begin{align*} 
\left(\frac{c^{n-1}}{\tau} S_h^{n,k}, v_h\right) &+ \alpha (\nabla S_h^{n,k}, \nabla v_h) + \gamma (S_h^{n,k},v_h)\\
&= \left(\frac{c^{n-1}}{\tau} S_h^{n-1},v_h\right) - \beta (D(S_h^{n,k-1},S_h^{n-1}), v_h) + \gamma (S_h^{n,k-1},v_h)
\end{align*}
for all $v_h \in V_h$. If convergence is reached after iteration $k=k^*$, set $S_h^{n+1}=S_h^{n,k^*}$.
\end{problem}
Choosing $\gamma \approx \beta$ allows to improve the convergence behavior also in case $\beta \gg 0$;
see \cite{FengSongTangYang13} for a similar argument.
As an immediate consequence of the particular construction, we obtain 
\begin{lemma}
For any $\alpha,\beta,\gamma \ge 0$, the above fixed-point iteration is well-defined. 
If $\tau>0$ is  sufficiently small, then the iteration converges to the unique solution of Problem~\ref{eq:variationalhh}.
\end{lemma}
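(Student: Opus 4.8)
The plan is to split the statement into two parts: well-definedness of each iteration step, and convergence of the iteration to the unique solution of Problem~\ref{prob:full} (equation \eqref{eq:variationalhh}) for $\tau$ small. For the first part, I would observe that at each step $k$ the left-hand side bilinear form
\[
a(u,v) = \left(\tfrac{c^{n-1}}{\tau} u, v\right) + \alpha(\nabla u,\nabla v) + \gamma(u,v)
\]
is symmetric, continuous, and coercive on $V_h$: coercivity follows from $c^{n-1} \ge \underline c > 0$ by (A1) together with $\alpha,\gamma\ge 0$ and $\tau>0$, which already gives $a(v,v) \ge \frac{\underline c}{\tau}\|v\|_{L^2(\Omega)}^2$. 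The right-hand side is a bounded linear functional on $V_h$ (all quantities involving $S_h^{n,k-1}$ and $S_h^{n-1}$ are fixed, and $D(\cdot,\cdot)$ evaluated at the fixed iterate lies in $L^2(\Omega)$ since $V_h$ is finite-dimensional). Hence Lax--Milgram, or just finite-dimensional linear algebra with a positive definite matrix, yields existence and uniqueness of $S_h^{n,k}$ for every $k$, for any $\alpha,\beta,\gamma\ge0$.

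For convergence, I would set up a contraction argument for the map $\Phi : V_h \to V_h$ sending $S_h^{n,k-1}$ to $S_h^{n,k}$. Subtracting the defining equations for two iterates $S_h^{n,k}$ and $S_h^{n,k'}$ (equivalently, applied to two inputs $w,\tilde w \in V_h$), and testing with $v_h = \Phi(w) - \Phi(\tilde w)$, gives
\[
\frac{\underline c}{\tau}\|\Phi(w)-\Phi(\tilde w)\|_{L^2(\Omega)}^2 + \alpha\|\nabla(\Phi(w)-\Phi(\tilde w))\|_{L^2(\Omega)}^2 + \gamma\|\Phi(w)-\Phi(\tilde w)\|_{L^2(\Omega)}^2 \le \big(\gamma(w-\tilde w) - \beta(D(w,S_h^{n-1})-D(\tilde w,S_h^{n-1})),\, \Phi(w)-\Phi(\tilde w)\big).
\]
Using the Lipschitz bound on the difference quotient already established in the proof of the well-posedness lemma, namely $|D(w,S_h^{n-1}) - D(\tilde w,S_h^{n-1})| \le \overline d\,|w - \tilde w|$ pointwise, together with the inverse inequality on $V_h$ if the $W^{2,\infty}$ bound on $d$ is only available on the range of the iterates (cf. Remark~\ref{rem:ass}), the right-hand side is bounded by $(\gamma + \beta\overline d)\|w-\tilde w\|_{L^2(\Omega)}\|\Phi(w)-\Phi(\tilde w)\|_{L^2(\Omega)}$. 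Dropping the nonnegative gradient term on the left and dividing, one gets
\[
\|\Phi(w)-\Phi(\tilde w)\|_{L^2(\Omega)} \le \frac{\tau(\gamma+\beta\overline d)}{\underline c + \tau\gamma}\,\|w-\tilde w\|_{L^2(\Omega)}.
\]
The contraction constant $q(\tau) = \tau(\gamma+\beta\overline d)/(\underline c + \tau\gamma)$ tends to $0$ as $\tau\to 0$, so for $\tau$ sufficiently small $q(\tau)<1$ and Banach's fixed point theorem applies: the iterates converge to the unique fixed point of $\Phi$.

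It remains to identify this fixed point with the solution of \eqref{eq:variationalhh}. That is immediate: a fixed point $S^\ast$ of $\Phi$ satisfies the iteration equation with $S_h^{n,k}=S_h^{n,k-1}=S^\ast$, and the $\gamma$-terms cancel, leaving exactly \eqref{eq:variationalhh}; conversely any solution of \eqref{eq:variationalhh} is a fixed point. Since the fixed point of a contraction is unique and, by the earlier lemma, \eqref{eq:variationalhh} has a unique solution for $\tau$ small, the two coincide, proving convergence to the unique solution of Problem~\ref{prob:full}. The only genuinely delicate point is the same one flagged in Remark~\ref{rem:ass}: the bound $|D(w,S_h^{n-1})-D(\tilde w,S_h^{n-1})|\le\overline d|w-\tilde w|$ needs a uniform second-derivative bound for $d$ along the iterates. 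One handles this exactly as before — either invoke (A2) directly, or use the a-priori $H^1$ bound plus the inverse inequality on $V_h$ to confine the iterates (and $S_h^{n-1}$) to a bounded set and take $\overline d$ to be the sup of $|d''|$ there; the contraction estimate and the smallness condition on $\tau$ are unaffected in substance.
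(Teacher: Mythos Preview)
Your argument is correct and follows precisely the approach sketched in the paper: linearity and positive definiteness of the left-hand side for well-definedness, then a Banach contraction argument for the map $\Phi:S_h^{n,k-1}\mapsto S_h^{n,k}$ for convergence. The paper's proof is only a two-line sketch, and your proposal correctly fills in the details (the explicit contraction constant, the Lipschitz bound on $D(\cdot,S_h^{n-1})$ via (A2), and the identification of the fixed point with the solution of \eqref{eq:variationalhh}); the only minor caveat is that the a-priori $H^1$ bound from the energy-decay lemma applies to the time-step solutions $S_h^n$, not automatically to the intermediate iterates $S_h^{n,k}$, so the clean route is simply to invoke (A2) globally as you indicate first.
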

\begin{proof}
First note that the problem that has to be solved in every step of the fixed-point iteration of Problem~\ref{prob:iter} is linear. 
Moreover, the left hand side in the above iteration defines a positive definite quadratic form. 
This implies the existence of a unique solution $S_h^{n,k}$ in every step of the iteration. 
For $\tau>0$ sufficiently small, one can show that the map $\phi : S_h^{n,k-1} \to S_h^{n,k}$ is contractive, 
and convergence follows by the Banach fixed-point theorem.
\end{proof}

\begin{remark}
For time step $\tau>0$ sufficiently small, the contraction constant in the above iteration becomes small
and convergence will typically be observed within a few iterations. 
To speed up convergence, one may alternatively also utilize Newton-type iterations.   
\end{remark}

\section{Numerical illustration} \label{sec:numerics}

For our computations, we consider the Allen-Cahn equation \eqref{eq:ac} and the hybrid model \eqref{eq:hyb}. 
The model parameters are set to $\lambda=\mu=\nu=0.1$ in all tests, and the double-well potential is defined as $\psi(S) = 4 S^2 (1-S)^2$. As potential for the external forces, we choose $F(S)=C S$ with constant $C$ to be specified below. 
The constant $c$ in the Allen-Cahn equation is chosen as $c=\int_0^1 \sqrt{2 \psi(S)} dS \approx 0.4714$. 
For this setting, the interface velocities of Allen-Cahn and the hybrid model given in \eqref{eq:vac} and \eqref{eq:vhyb} agree up to higher order terms; see \cite{Alber15,AlberZhu13} for details.

For the space discretization, we utilize the finite element method described in Section~\ref{sec:implementation} with polynomial degree $k=1$.
The time integration is performed by the implicit time-stepping scheme \eqref{eq:variationalhh} with constant time step $\tau>0$, and the nonlinear systems arising in every time step are solved with the fixed-point iteration outlined above.

\subsection{A quasi one-dimensional problem}

As a first test case, we consider a quasi one-dimensional geometric setting. 
We choose $\Omega=(-3,3)^2$ and set 
\begin{align*}
S_0(x,y) = \chi_{\{|x| \le 3/2\}}(x,y),
\end{align*}
where $\chi_D$ is the characteristic function of the set $D$. 
The interface $\Gamma$ between the two phases at time $t=0$ 
is a straight line here and thue the curvature $\kappa_\Gamma$ is zero. 
This can be shown to remain true for all $t>0$ by a symmetry argument.
The evolution of the interface is therefore only driven by external forces in this example.
For our computations, we choose $F(S)=C S$ with $C=\frac{1}{2}$ 
which yields a constant driving force $F'(S)=\frac{1}{2}$ acting on the whole domain.

As mentioned above, the solution $S(x,y;t)$ can be shown to be independent of $y$ for all $t \ge 0$.
We thus obtain $S(x,y;t)=\widehat S(x;t)$ with $\widehat S$ defined by
\begin{alignat*}{5}
c(|\dx \widehat S|) \dt \widehat S &= \alpha \dxx \widehat S - \beta d(\widehat S), \qquad && x \in \omega, \ t>0, \\
\alpha \dx \widehat S &= 0, \qquad &&x \in \partial\omega, \ t>0.
\end{alignat*}
Here $\omega=(-3,3)$ is the one-dimensional cross-section at arbitrary $y$ and $\partial\omega=\{-3,3\}$. 
The initial condition for the one-dimensional problem is $\widehat S(x;0)=\chi_{\{|x|<3/2\}}(x)$. 
In Figure~\ref{fig:1d}, we display a few snapshots of the solution $S^{ac}(x,y;t)$ obtained with the Allen-Cahn equation \eqref{eq:ac}
in two dimensions and the corresponding solutions $\widehat S^{ac}$ and $\widehat S^{hyb}$ for the Allen-Cahn problem and the hybrid model in one space dimension. 
\begin{figure}[ht!] 
\centering
\includegraphics[width=0.2\textwidth]{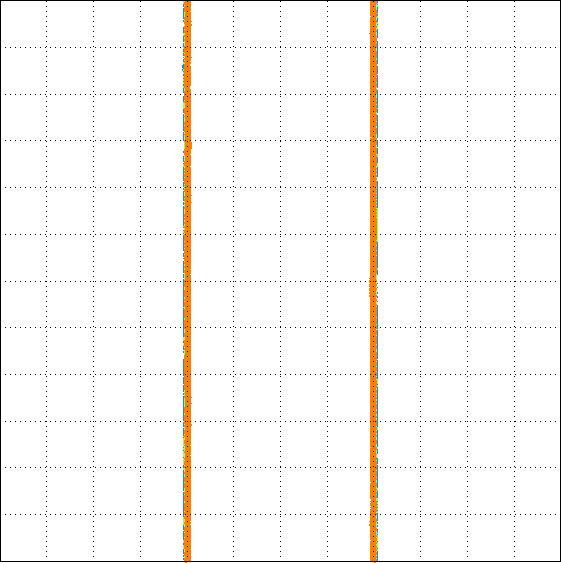} 
\hspace*{1ex}
\includegraphics[width=0.2\textwidth]{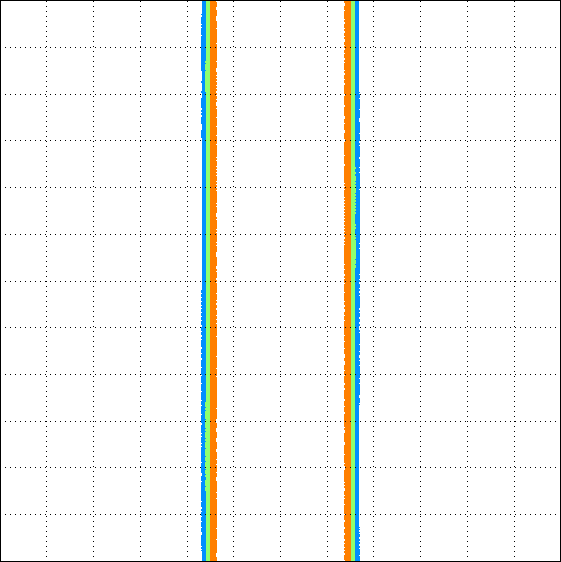} 
\hspace*{1ex}
\includegraphics[width=0.2\textwidth]{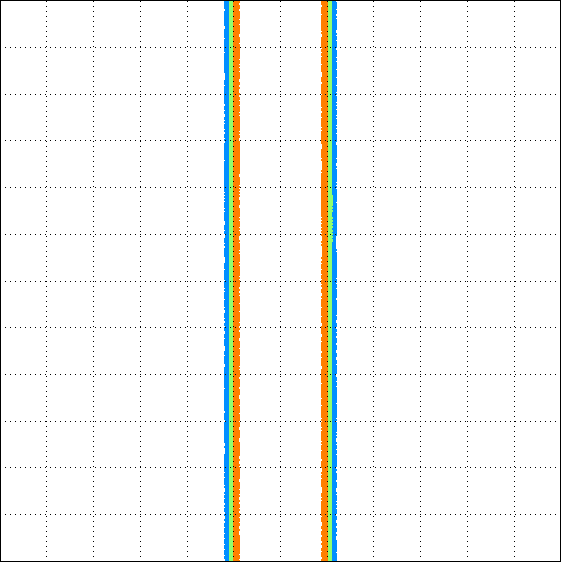} 
\hspace*{1ex}
\includegraphics[width=0.2\textwidth]{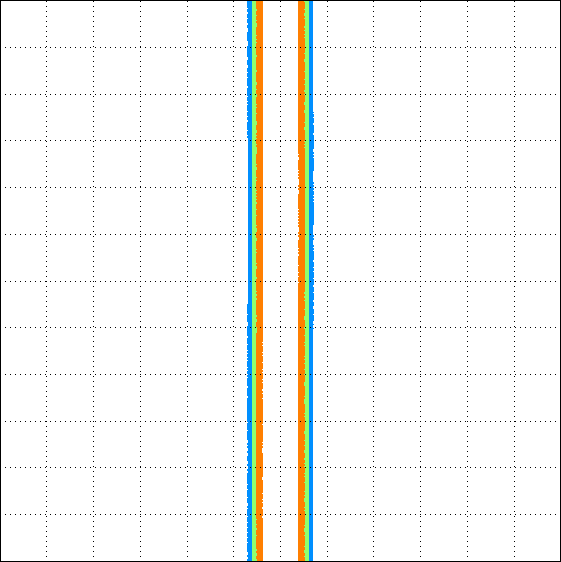} 
\\[3ex]
\includegraphics[width=0.2\textwidth]{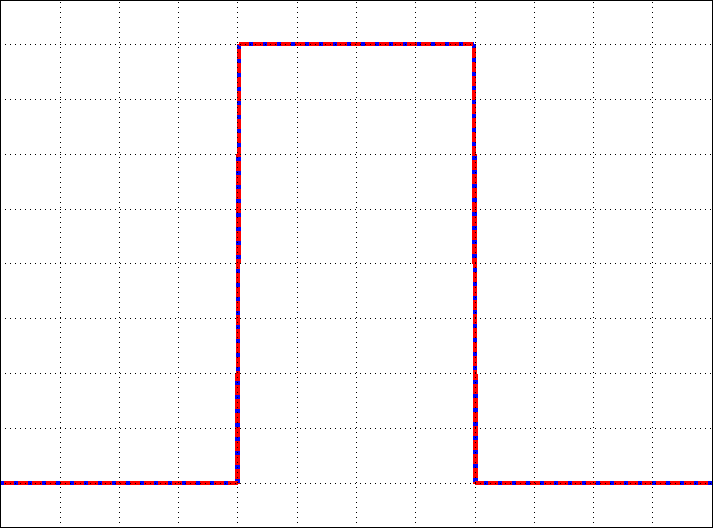} 
\hspace*{1ex}
\includegraphics[width=0.2\textwidth]{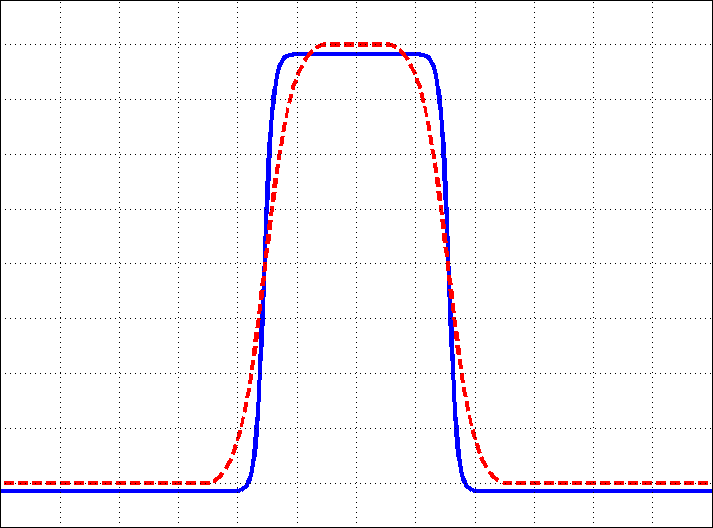} 
\hspace*{1ex}
\includegraphics[width=0.2\textwidth]{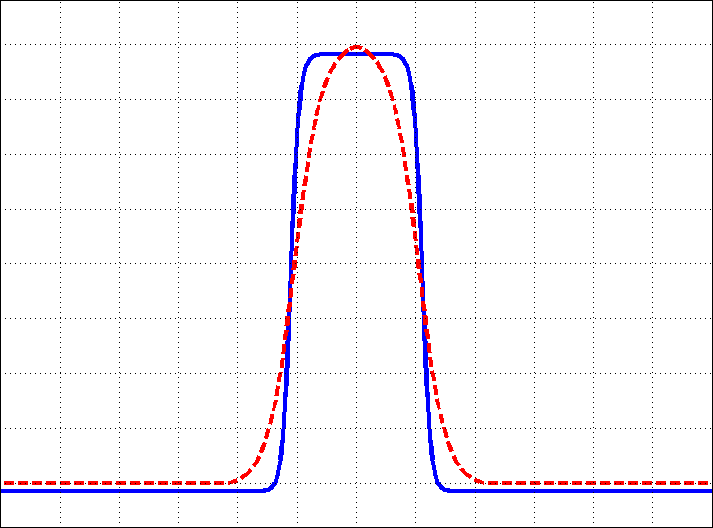} 
\hspace*{1ex}
\includegraphics[width=0.2\textwidth]{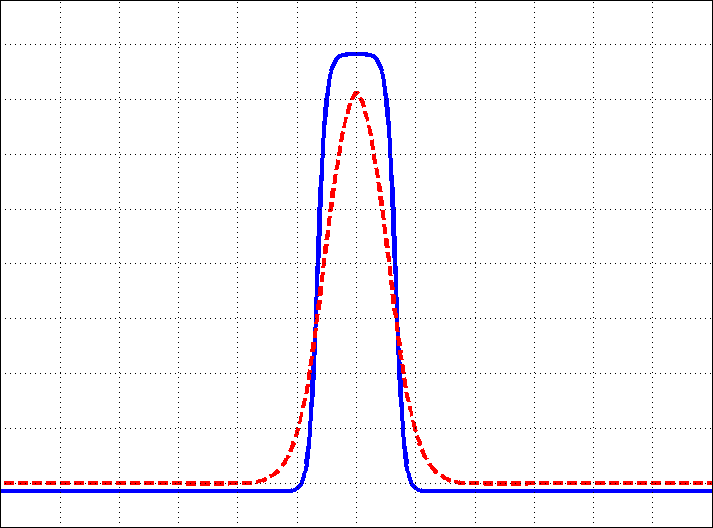} 
\caption{Interface $\Gamma(t)$ for the solution $S^{hyb}$ of the two-dimensional hybrid model at time $t=0,0.5,1,1.5$ (top) and solutions $\widehat S^{ac}$ and $\widehat S^{hyb}$ for the corresponding one-dimensional problems (bottom). 
The grid lines do not coincide with the mesh but are plotted only to allow for a better comparison.\label{fig:1d}} 
\end{figure}
The results clearly illustrate the relation $S^{ac}(x,y;t) = \widehat S^{ac}(x;t)$ between the solutions of the one- and the two-dimensional problem. 
Moreover, the propagation of interface $\widehat \Gamma=\{x : \widehat S(x)=1/2\}$ for the one-dimensional Allen-Cahn and the corresponding hybrid model take place at the same speed, 
which is in perfect agreement with formulas \eqref{eq:vac} and \eqref{eq:vhyb}.
In Figure~\ref{fig:area1d}, we display the area $A(S(t))=|\{x : \widehat S(x;t)>1/2\}|$ surrounded by the interface $\widehat \Gamma$ and we depict the decay of the energy functional for the two models illustrating the underlying gradient flow structure. 
\begin{figure}[ht!]
\centering
\includegraphics[width=0.7\textwidth]{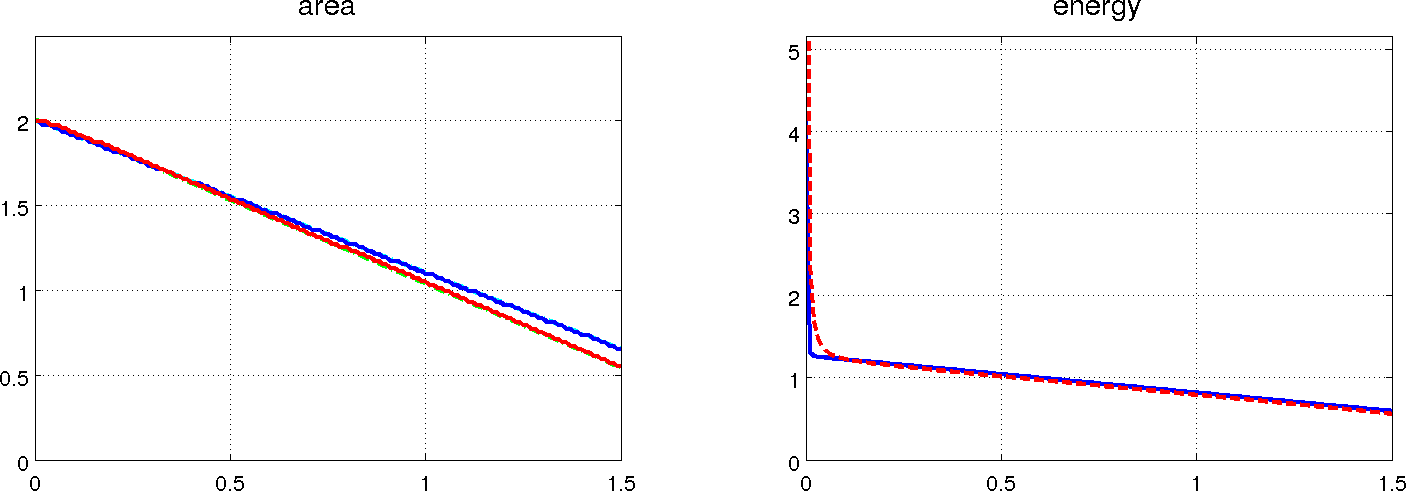} 
\\[2ex]
\caption{Left: Area $A(S(t))=\{x : \widehat S(x;t)>1/2\}$ for the one-dimensional Allen-Cahn model (blue) and the one- and two-dimensional hybrid model (red on top of green). Right: Energy $E(\widehat S(t)) = \int_\omega \frac{\alpha}{2} |\dx \widehat S(x;t)|^2 + \beta d(\widehat S(x;t)) \; dx$ for the one-dimensional Allen-Cahn (blue) and hybrid (red) model. \label{fig:area1d}} 
\end{figure}

\subsection{Shrinking of a circle by mean curvature}

As a second test case, we consider the evolution of a circular interface $\Gamma$. 
We again choose $\Omega=(-3,3)^2$ and define the initial values as
\begin{align*}
S_0(x,y) = \chi_{\{(x,y) : x^2+y^2<9/4\}}(x,y). 
\end{align*}
Here we set $F(S)=0$ which yields $[C]=0$ and thus the motion is only 
driven by the curvature of the interface. 
The solution of the sharp interface limit can be shown to be radially symmetric and 
according to \eqref{eq:vsi}, the propagation of the interface is governed by 
\begin{align*}
r'(t) = c \lambda^{1/2} \kappa_{\Gamma(t)} = -c\lambda^{1/2} \frac{1}{r(t)}, \qquad r(0)=3/2;
\end{align*}
here $r(t)$ denotes the radial position of the interface $\Gamma(t)$ at time $t$. 
This allows to compute the area $A(t)$ surrounded by the interface $\Gamma(t)$ analytically, i.e.,
\begin{align*}
A(t) = \pi r(t)^2 = \max\{\pi r(0)^2 - 2 \pi c \lambda^{1/2} t,0\}.
\end{align*}
In Figure~\ref{fig:circle2d}, we display a few snapshots of the interface $\Gamma(t)$ for the numerical solutions 
of the Allen-Cahn model \eqref{eq:ac} and the hybrid model \eqref{eq:hyb}. 
\begin{figure}[ht!]
\centering
\includegraphics[width=0.2\textwidth]{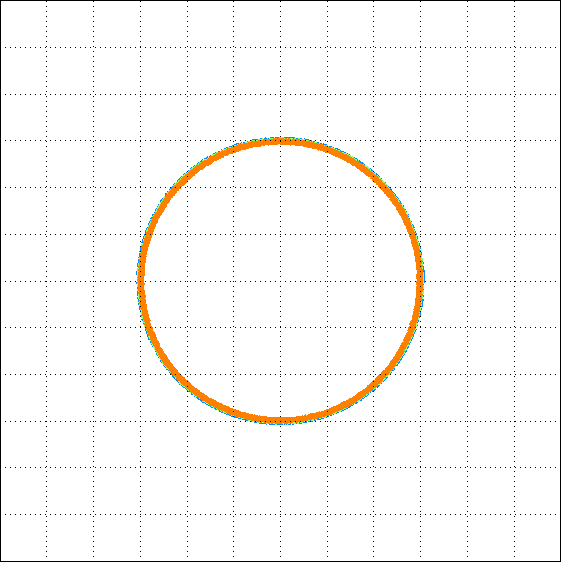} 
\hspace*{1ex}
\includegraphics[width=0.2\textwidth]{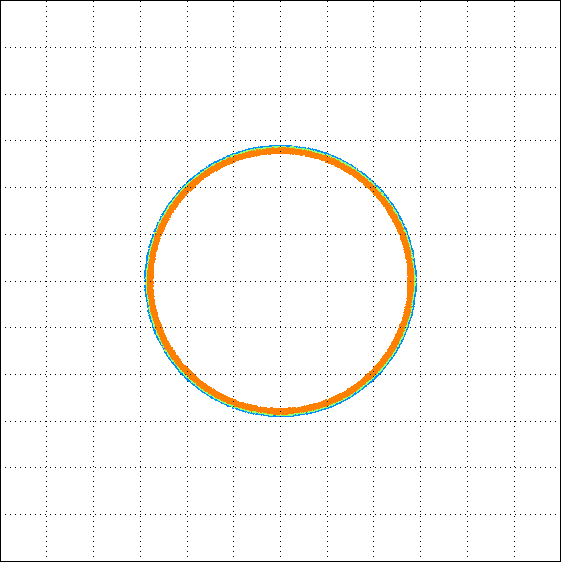} 
\hspace*{1ex}
\includegraphics[width=0.2\textwidth]{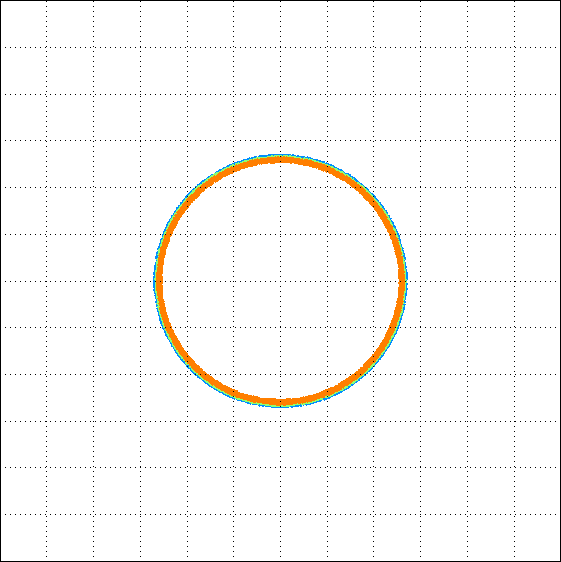} 
\hspace*{1ex}
\includegraphics[width=0.2\textwidth]{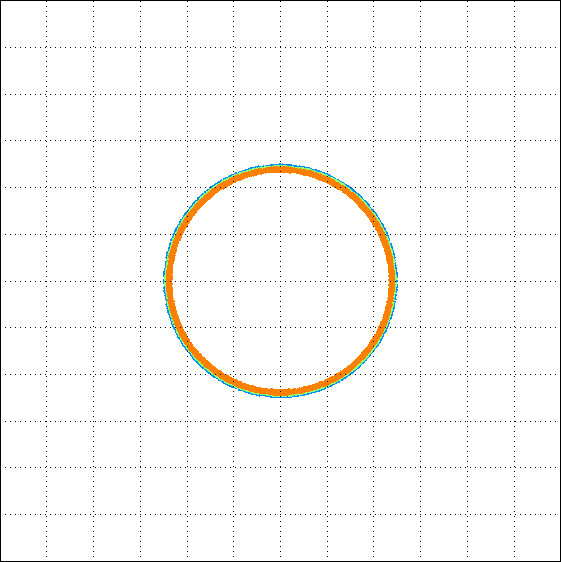} 
\\[3ex]
\includegraphics[width=0.2\textwidth]{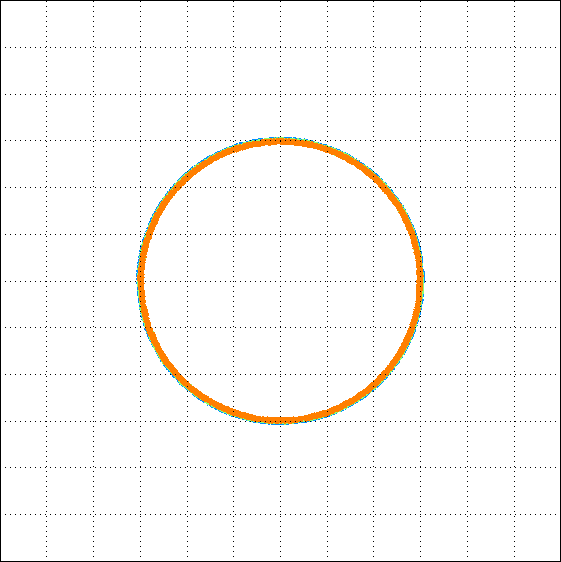} 
\hspace*{1ex}
\includegraphics[width=0.2\textwidth]{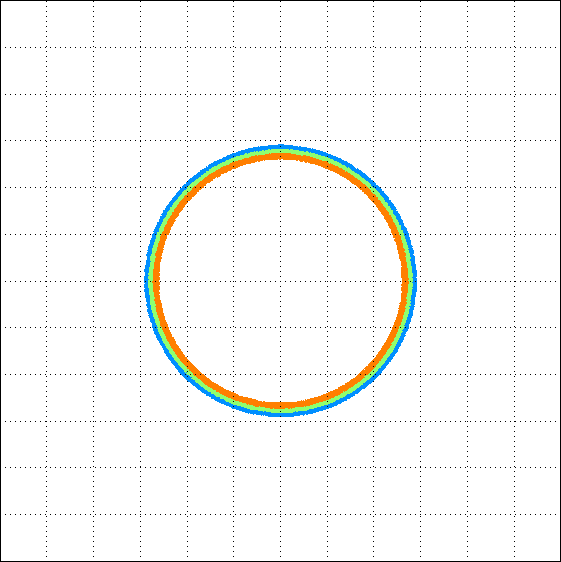} 
\hspace*{1ex}
\includegraphics[width=0.2\textwidth]{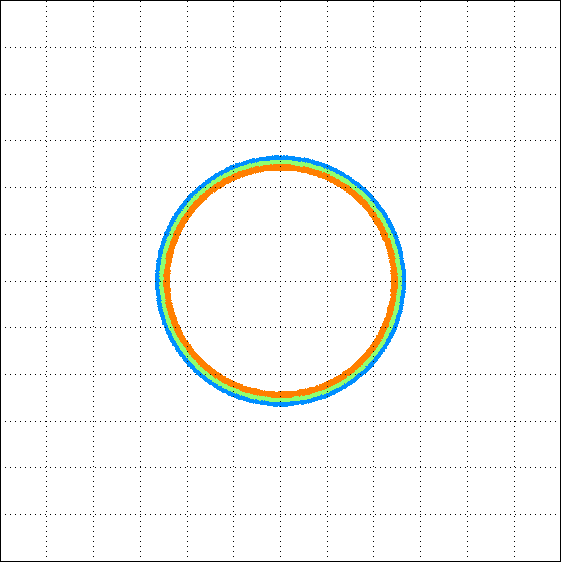} 
\hspace*{1ex}
\includegraphics[width=0.2\textwidth]{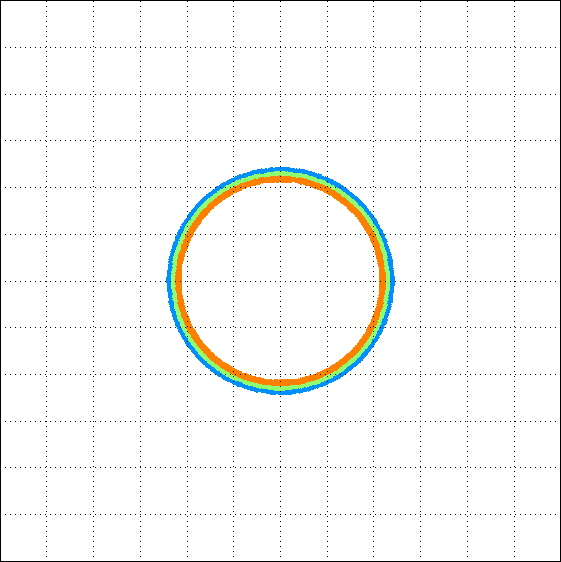} 
\\[2ex]
\caption{Interface $\Gamma(t)$ for solutions $S^{ac}$ and $S^{hyb}$ of the Allen-Cahn equation (top) and the corresponding hybrid model (bottom) for $t=0,1,2,3$.\label{fig:circle2d}.} 
\end{figure}
A brief visual inspection shows that the motion of the interface is rather similar for both models.
One can further deduce from the plots that the width of the transition zone in the hybrid model is somewhat larger than for the Allen-Cahn model; compare with our discussion in the introduction. 
A coarser mesh might therefore be used for the simulation of the interface evolution by the hybrid model. For a better comparison of the two models, we again display in Figure~\ref{fig:area_circle2d} the evolution of the area $A(t)$ and of the energy $E(S(t))$ representing one phase of the system.
\begin{figure}[ht!]
\centering
\includegraphics[width=0.7\textwidth]{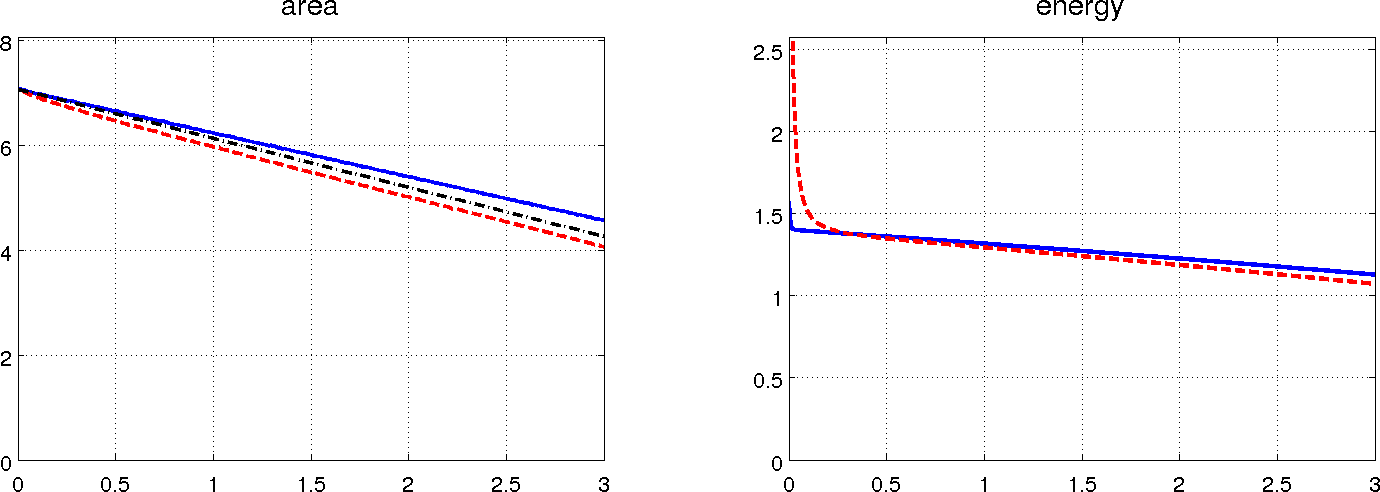} 
\\[2ex]
\caption{Left: area $A(t)$ enclosed by the interface $\Gamma(t)$ for the Allen-Cahn model (blue), the hybrid model (red),
and the exact solution of the sharp inferface limit (black). 
Right: Evolution of the energy $E(S(t))$.\label{fig:area_circle2d}} 
\end{figure}
In accordance with formulas \eqref{eq:vac} and \eqref{eq:vhyb} for our choice of parameters, 
the evolution of the interface here occurs at approximately the same speed. 
Moreover, the energy $E(S(t))$ is monotonically decreasing for both models which is in perfect agreement 
with the assertions of Lemma~\ref{lem:decayhh}. 

\subsection{A non-smooth and non-convex geometry}

As a last test case, we now consider the evolution of an interface which initially is non-smooth and encloses a non-convex region at time $t=0$.
Some snapshots of the evolution are depicted in Figure~\ref{fig:general2d}.
\begin{figure}[ht!]
\centering
\includegraphics[width=0.2\textwidth]{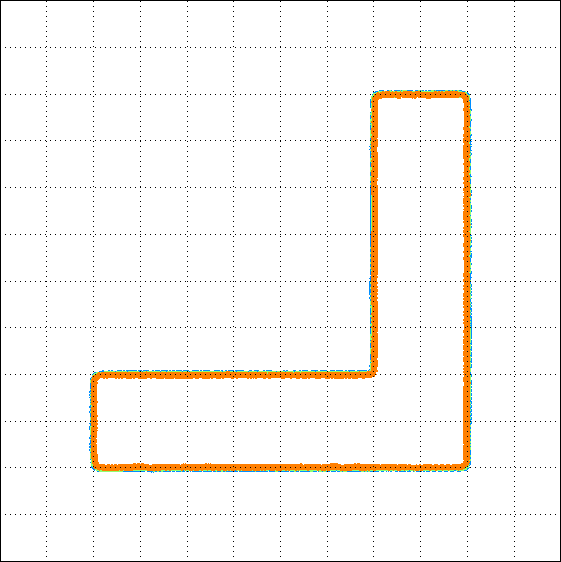} 
\hspace*{1ex}
\includegraphics[width=0.2\textwidth]{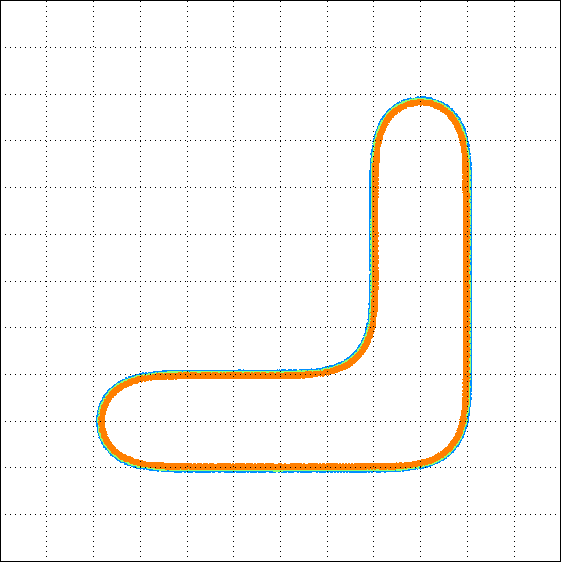} 
\hspace*{1ex}
\includegraphics[width=0.2\textwidth]{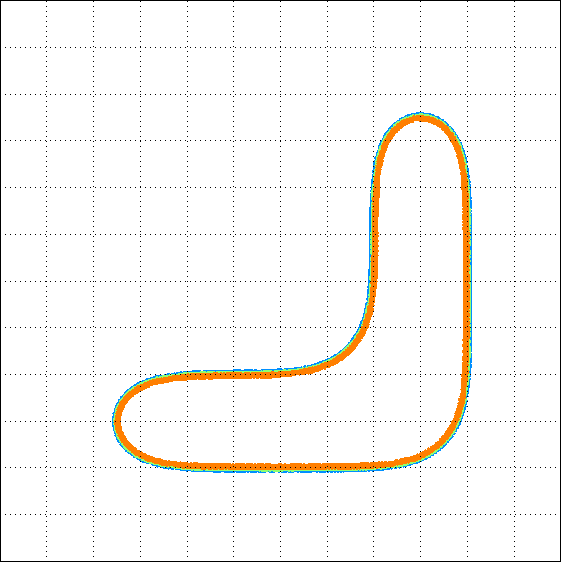} 
\hspace*{1ex}
\includegraphics[width=0.2\textwidth]{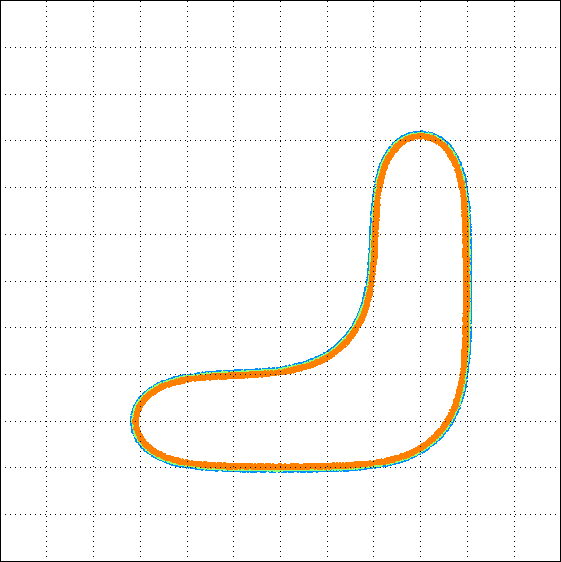} 
\\[3ex]
\includegraphics[width=0.2\textwidth]{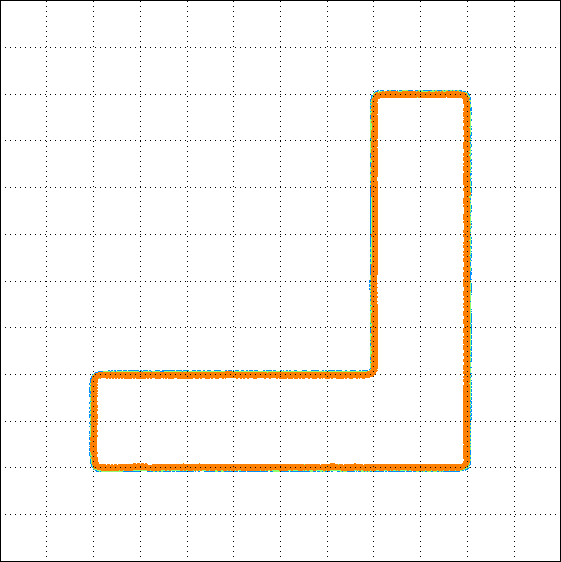} 
\hspace*{1ex}
\includegraphics[width=0.2\textwidth]{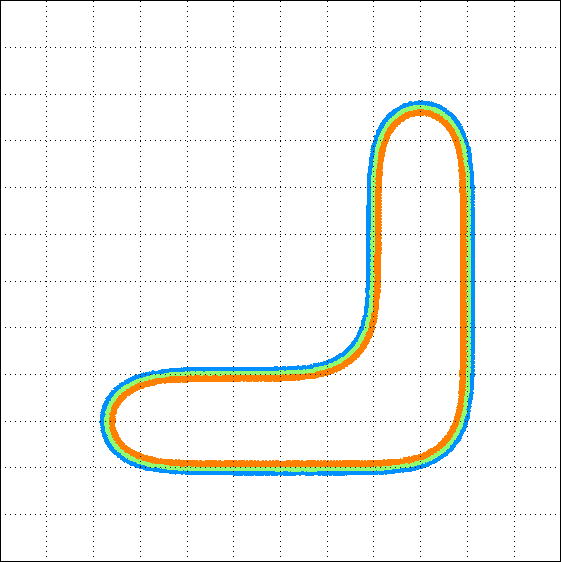} 
\hspace*{1ex}
\includegraphics[width=0.2\textwidth]{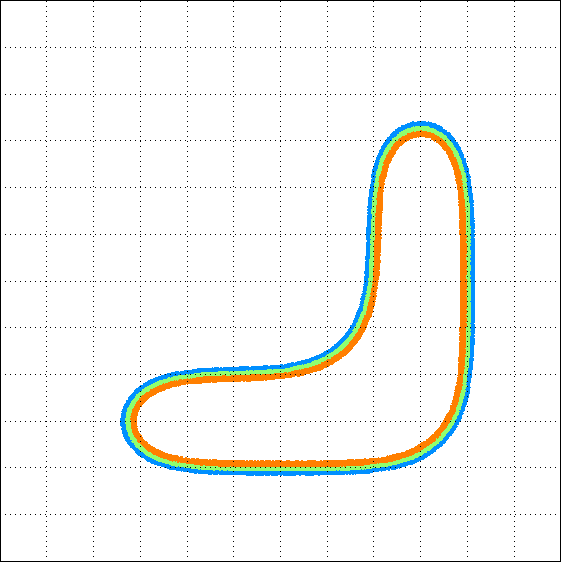} 
\hspace*{1ex}
\includegraphics[width=0.2\textwidth]{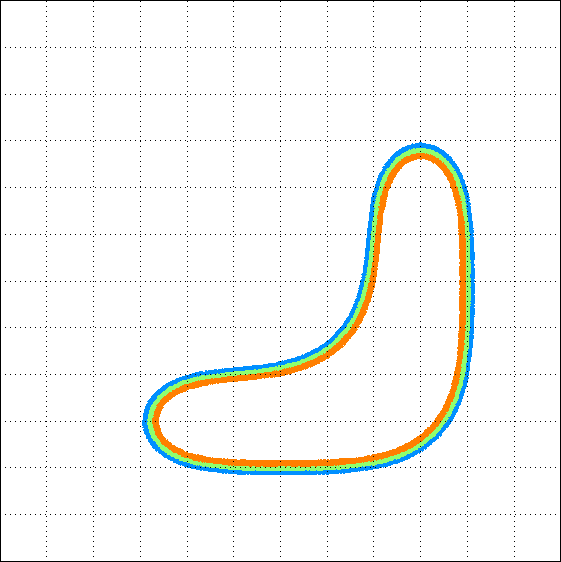} 
\\[2ex]
\caption{Evolution of the interface for Allen-Cahn model (left) and hybrid model (right) for $t=0,0.5,1,1.5$ (top to bottom)\label{fig:general2d}} 
\end{figure}
Like in the previous test, we set the force potental to $F(S)=0$ which implies $[C]=0$, 
and hence the motion is again driven only by the curvature. 
Note that in the vicinity of convex corners, the interface moves towards the interior of the enclosed area
while at the non-convex corners, the interface propagates in the other direction, which is caused by the change of sign 
in the curvature.
In Figure~\ref{fig:area_general2d}, we again depict the area surrounded by the interface and the uniform decay of the energy for the numerical solutions obtained with the two models. 
\begin{figure}[ht!]
\centering
\includegraphics[width=0.7\textwidth]{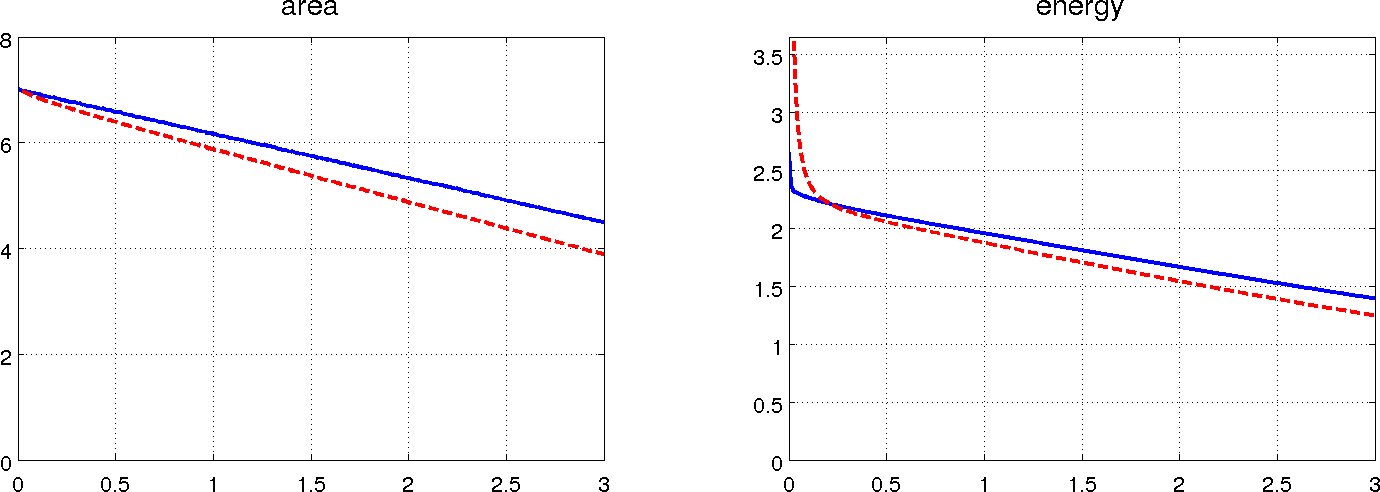} 
\\[2ex]
\caption{Left: area $A(t)$ enclosed by the interface $\Gamma(t)$ for Allen-Cahn (blue) and the hybrid model (red). 
Right: Evolution of the energies $E(S(t))$.\label{fig:area_general2d}} 
\end{figure}
Like in the previous examples, the velocity of the interface motion in the Allen-Cahn and the hybrid model are very similar; compare with the formulas \eqref{eq:vac} and \eqref{eq:vhyb}. Due to the gradient flow structure of the fully discrete evolution, a strict decrease in the energy $E(S(t))$ can again be observed.  
The results are very similar, and the slight discrepancies can be explained by approximation errors.

\section{Discussion} \label{sec:discussion}

In this paper, we investigated the systematic numerical approximation of a general class of Allen-Cahn type equations.
The common feature of these models was a gradient flow structure with respect to an associated energy. 
We proposed and analyzed semi-discrete and fully discrete numerical schemes which strictly preserve the underlying gradient flow structure and which therefore automatically yield uniformly energy stable discrete approximations. 
Well-posedness of the numerical schemes and energy decay was established theoretically and illustrated by numerical tests.
Our computational results also provide further numerical evidence for asymptotic expansions of interface motion obtained \cite{Alber15,AlberZhu13}.

In our analysis, we used some rather strong structural assumptions on the coefficients arising in the equations, which should allow to conduct a full a-priori convergence analysis of the semi- and fully discrete scheme by following the arguments of \cite{ChenElliottGardinerZhao98,FengProhl03}. 
As indicated in remarks, a detailed inspection of the proofs allows to further relax these assumptions. 
Our arguments and methods of proof are rather general and seem to be applicable also to phase-field models for the dynamics of solid-solid phase transition which are governed by coupled Allen-Cahn and elasticity equations.
These topics and further extensions are left for future research.

\section*{Acknowledgments}
This work was supported by the German Research Foundation (DFG) via grants IRTG~1529, TRR~154, and Eg-331/1-1
and by the German Excellence Initiative via grant GSC~233.


\end{document}